\numberwithin{equation}{section}
\theoremstyle{plain} 
\newtheorem{proposition}{Proposition}[section]  
\newtheorem{lemma}[proposition]{Lemma}
\newtheorem{corollary}[proposition]{Corollary} 
\newtheorem{theorem}[proposition]{Theorem} 
\theoremstyle{definition} 
\newtheorem{remark}[proposition]{Remark} 
\newtheorem{example}[proposition]{Example} 
\newtheorem{question}[proposition]{Question}
\newtheorem{observation}[proposition]{Observation}
\newtheorem{notation}[proposition]{Notation}
\newtheorem{construction}[proposition]{Construction}
\newtheorem{notation and recalls}[proposition]{Notations and Recalls}
\newcommand\Tor{\operatorname{Tor}}
\newcommand\Hom{\operatorname{Hom}}
\newcommand\Ext{\operatorname{Ext}}
\newcommand\Rad{\operatorname{Rad}}
\newcommand\reg{\operatorname{reg}}
\newcommand\Coker{\operatorname{Coker}}
\newcommand\im{\operatorname{Im}}
\newcommand\grade{\operatorname{grade}}
\newcommand\depth{\operatorname{depth}}
\author[T.~Hibi, P.~Schenzel]{Takayuki Hibi and Peter Schenzel}
\title[Locally linear resolutions]{Monomial curves and locally linear resolution}
\address{Department of Pure and Applied Mathematics, Graduate School of Information Science and Technology, The University of Osaka, Suita, Osaka 565-0871, Japan}
\email{hibi@math.sci.osaka-u.ac.jp}
\address{Martin-Luther-Universit\"at Halle-Wittenberg,
Institut f\"ur Informatik, D - 06 099 Halle (Saale), Germany}
\email{peter.schenzel@informatik.uni-halle.de}
\date{\today}
\begin{document}

\begin{abstract} 
For four elements of a Noetherian ring we construct complexes of free modules of length three (resp. five) 
by an explicit description of the homomorphisms of the free modules. We provide exactness criteria 
for them. As an application we use these results in order to describe explicit the minimal free resolution 
of the Hartshorne--Rao module of a monomial curve lying on a smooth quadric. Also it provides an 
example of linearly generated module with syzygies of arbitrary high degree. 
\end{abstract}

\subjclass[2020]
{Primary: 13D02, 13H45; Secondary: 15B99 }
\keywords{complexes, resolutions, monomial curve, Hartshorne--Rao module}

\maketitle

\section{Introduction}
Let $A=K[x_1, \ldots, x_n]$ denote the polynomial ring in $n$ variables over a field $K$ with each $\deg x_i = 1$.  Let $M = \bigoplus_{i=0}^{\infty} M_i$ be a finitely generated positively graded module over $A$ and
\[
	\label{mgfr}
	0 \to F_p \to F_{p-1}
\to  \cdots  \to F_1 \to  F_0 \to  M  \to  0  \tag{$\star$}
\]
the minimal graded free resolution of $M$ with 
\[
	\label{betti}
	F_i = \oplus_{j} A(-j)^{\beta_{ij}} = \oplus_{j=1}^{\beta_i} A(-d_{ij}). 
\]
We say that (\ref{mgfr}) is {\em locally linear} if there are integers $d < e$, $1 < s < p-1$ for which
\[
F_i = \begin{cases}
	A(-d-i)^{\beta_{i}} & \text{ if } 0 \leq i < s, \\
	 A(-d-s)^{\beta_{s,\,d+s}} \oplus A(-e-s)^{\beta_{s,\,e+s}} & \text{ if } i =s,\\
	  A(-e-i)^{\beta_{i}} & \text{ if }  s < i \leq p. 
	\end{cases}
\]
When (\ref{mgfr}) is locally linear, we call $g = e - d > 0$ the {\em gap} of $M$. 
At first glance, it seems unclear if there exists a locally linear resolution with an arbitrary large gap.  

\begin{question}
	Does there exist a locally linear resolution of a graded module whose gap is arbitrary large? 
	
	Especially, does there exist a locally linear resolution of a monomial ideal (\cite{HHgtm260}) whose gap is arbitrary large?  
\end{question} 

In the present paper, by using the defining ideal of a certain  monomial curve, we construct a finitely generated positively graded module over $A$ whose minimal graded free resolution is locally linear with an arbitrary large gap.      

Let $0 < b \leq a-2$ be integers with $\gcd(a,b) = 1$ and $d = a+b$.  Let $C \subset \mathbb{P}_\Bbbk^3$ denote the monomial curve given as the image 
\[
(s:t) \in \mathbb{P}_\Bbbk^1 \mapsto (s^d:s^at^b:s^bt^a:t^d) \in \mathbb{P}_\Bbbk^3.
\]
Let $A = K[x_0,x_1,x_2,x_3]$ and $I_C \subset A$ the defining ideal of $C$.  We investigate the Hartshorne--Rao module ${HR}(C) = H^1_{A_+}(A/I_C)$ of $C$.  

\begin{theorem} \label{thm} 
	With the previous notation we construct an $(a-b-1)\times (2(a-b+1))$ matrix $\mathcal{D}$ such that: 
	\begin{itemize}
		\item[(a)] $HR(C) (b)\cong A^{a-b-1}/\im \mathcal{D}$.
		\item[(b)] 
		The minimal free resolution of $HR(C)$ is given by
		\begin{gather*}
			0 \to A^{a-b-1}(-a-2) \to A^{2(a-b)}(-a-1) \to A^{a-b+1}(-a) \oplus A^{a-b+1}(-b-2) \\
			\to A^{2(a-b)}(-b-1) \to A^{a-b-1}(-b) \to  HR(C)  \to 0.
		\end{gather*}
		\item[(c)] $\Hom_\Bbbk(HR(C), \Bbbk) \cong HR(C)(a+b-2)$.
	\end{itemize}
\end{theorem}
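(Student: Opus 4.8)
The plan is to reduce everything to cohomology on the smooth quadric containing $C$. Since $x_0x_3 = x_1x_2 = s^dt^d$ along $C$, the curve lies on the smooth quadric $Q = V(x_0x_3 - x_1x_2) \cong \mathbb{P}^1_\Bbbk \times \mathbb{P}^1_\Bbbk$ (the Segre image), and in Segre coordinates the parametrization becomes $(s:t) \mapsto ((s^a:t^a),(s^b:t^b))$; eliminating $(s:t)$ shows $C = V(u_0^b v_1^a - u_1^b v_0^a)$ is the irreducible (as $\gcd(a,b)=1$) divisor of bidegree $(b,a)$, so $\mathcal{I}_{C/Q} = \mathcal{O}_Q(-b,-a)$. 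First I would use that $\mathcal{I}_Q(n) = \mathcal{O}_{\mathbb{P}^3}(n-2)$ has vanishing first and second cohomology to identify
\[
HR(C)_n = H^1(\mathbb{P}^3, \mathcal{I}_C(n)) \cong H^1\big(Q, \mathcal{O}_Q(n-b,\, n-a)\big),
\]
and then evaluate the right-hand side by the K\"unneth formula. Because $0 < b \le a-2$, only the range $b \le n \le a-2$ survives, with $\dim_\Bbbk HR(C)_n = (n-b+1)(a-n-1)$. In particular $HR(C)$ has finite length, is generated in its bottom degree $b$ by $a-b-1$ elements, and its Hilbert function is symmetric about $(a+b-2)/2$.

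For part (a), after the shift by $b$ the generators sit in degree $0$ and span $HR(C)(b)_0 \cong \Bbbk^{a-b-1}$, producing the surjection $A^{a-b-1} \twoheadrightarrow HR(C)(b)$. The remaining task is to make the $A$-module structure explicit: using a \v{C}ech/local-cohomology description of $H^1(\mathbb{P}^1,\mathcal{O}(q))$ for $q \le -2$ as a span of ``negative'' monomials, I would record how each variable $x_0 = u_0v_0,\ x_1 = u_0v_1,\ x_2 = u_1v_0,\ x_3 = u_1v_1$ acts as a map $HR(C)_n \to HR(C)_{n+1}$. Collecting the resulting linear relations among the generators assembles the asserted $(a-b-1)\times 2(a-b+1)$ matrix $\mathcal{D}$ and yields $HR(C)(b) \cong A^{a-b-1}/\im\mathcal{D}$. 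Two of the $2(a-b+1)$ relations provided by $\mathcal{D}$ are superfluous, so $\mathcal{D}$ is a convenient but non-minimal presentation; the minimal number of linear first syzygies is $2(a-b)$, in agreement with (b).

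For part (b) I would feed the four elements $x_0,x_1,x_2,x_3$ together with the matrix $\mathcal{D}$ into the explicit length-five complex constructed in the previous sections, so that its differentials compose to zero by design and its cokernel is $HR(C)(b)$. The crux is then to invoke the exactness criterion established above: one must check the grade (depth) conditions on the ideals of minors of the successive differentials. Here these ideals are $A_+$-primary, so their grade equals $4 = \dim A$, which forces exactness of the entire complex; since every entry of every differential lies in $A_+$, the complex is minimal, and reading off the shifts (and undoing the twist by $b$) gives precisely the displayed resolution, whose Betti table is locally linear with gap $a-b-2$. I expect this verification of the grade conditions for $\mathcal{D}$ to be the main obstacle, as it requires controlling the combinatorics of the minors of the explicit matrix rather than merely its numerical shape.

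Finally, part (c) follows from graded local duality together with the self-duality of the resolution in (b). As $HR(C)$ has finite length it is perfect of codimension $4$, so $\Ext^i_A(HR(C),A) = 0$ for $i < 4$ and dualizing the minimal resolution yields a free resolution of $\Ext^4_A(HR(C),A)$; comparing shifts term by term shows the reversed dual complex has $i$-th term $F_i(a+b+2)$, whence $\Ext^4_A(HR(C),A) \cong HR(C)(a+b+2)$. Graded local duality over $A = \Bbbk[x_0,\ldots,x_3]$ with $\omega_A = A(-4)$ gives $\Hom_\Bbbk(HR(C),\Bbbk) \cong \Ext^4_A(HR(C),A)(-4)$, and combining the two identifications produces $\Hom_\Bbbk(HR(C),\Bbbk) \cong HR(C)(a+b-2)$. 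The only delicate point is upgrading the numerical symmetry of the Betti table to an actual isomorphism of complexes, which I would extract from the symmetric shape of the explicit differentials, equivalently from the involution $s \leftrightarrow t$, $x_0 \leftrightarrow x_3$, $x_1 \leftrightarrow x_2$ underlying the whole construction.
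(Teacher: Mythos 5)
Your proposal is correct in outline, but it reaches part (a) by a genuinely different route than the paper, while (b) and (c) coincide with the paper's arguments. The paper never touches the Segre geometry of the quadric. It works instead with the graded module $M=I_C/(Q)$: the explicit relations $x_2F_i-x_0F_{i+1}=x_1^{a-i-1}x_3^iQ$ and $x_3F_i-x_1F_{i+1}=x_0^{a-b-i-1}x_2^{b+i}Q$ identify $M$ with $\Coker\mathcal{A}'$ under the substitution $(f_1,f_2,f_3,f_4)\mapsto(x_3,x_1,x_0,x_2)$ (Theorem \ref{thm-3}); Buchsbaum--Eisenbud gives exactness of $C_1^{\bullet}$ (Theorem \ref{res-1}); and then (a) is pure duality, with no cohomology of sheaves at all: $HR(C)\cong H^2_{A_+}(M)$, graded local duality gives $\Hom_\Bbbk(H^2_{A_+}(M),\Bbbk)\cong\Ext^2_A(M,A(-4))$, the general theory gives $\Ext^2_A(M,A)\cong N=\Coker\mathcal{D}$, and the self-duality $\Ext^4_A(N,A)\cong N$ of Theorem \ref{res-3} converts this into $HR(C)(b)\cong N$. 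Your K\"unneth computation $HR(C)_n\cong H^1(Q,\mathcal{O}_Q(n-b,n-a))$ is correct and delivers the Hilbert function $(n-b+1)(a-n-1)$ up front, which the paper only extracts at the end (Corollary \ref{cor-3}(d)); what the paper's route buys is that $\mathcal{D}$, and the fact that it presents $HR(C)(b)$, fall out of the duality machinery with no cocycle bookkeeping. For (b) and (c) you propose exactly what the paper does: Buchsbaum--Eisenbud exactness of the length-five complex because the relevant ideals of minors are $A_+$-primary, minimality because every entry is linear, and local duality combined with the self-duality of the complex, which the paper obtains from the substitution $(f_1,f_2,f_3,f_4)\mapsto(-f_1,-f_4,-f_3,-f_2)$ of Remark \ref{rem-1} (your involution $x_0\leftrightarrow x_3$, $x_1\leftrightarrow x_2$ is the same symmetry in geometric clothing).

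There are two soft spots in your part (a) that need to be closed. First, generation of $HR(C)$ in degree $b$ does not follow from the Hilbert function; it needs surjectivity of the multiplication maps on the K\"unneth factors (which does hold, but is a computation, not a consequence of dimensions). Second, and more seriously, recording how $x_0,\ldots,x_3$ act on \v{C}ech classes only shows $\im\mathcal{D}\subseteq\ker\bigl(A^{a-b-1}\twoheadrightarrow HR(C)(b)\bigr)$, i.e.\ that $HR(C)(b)$ is a quotient of $\Coker\mathcal{D}$; nothing in your write-up gives the reverse inclusion, so as written ``yields $HR(C)(b)\cong A^{a-b-1}/\im\mathcal{D}$'' is a gap. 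It is fixable with ingredients you already have: exactness of the length-five complex is independent of any identification of its cokernel, hence determines the Hilbert series of $\Coker\mathcal{D}$, and a graded surjection $\Coker\mathcal{D}\twoheadrightarrow HR(C)(b)$ between modules with equal Hilbert functions is an isomorphism---but this must be said, and it makes (a) logically dependent on the exactness statement in (b) (harmless, since that exactness nowhere uses (a)). Finally, your reading of the stated size $(a-b-1)\times 2(a-b+1)$ as a non-minimal presentation with two superfluous relations is not what happens: writing $e_1,\ldots,e_{a-b-1}$ for the \v{C}ech basis of $HR(C)_b$, the linear relations are exactly $x_0e_1=x_2e_1=0$, $x_1e_{a-b-1}=x_3e_{a-b-1}=0$, $x_0e_{j+1}=x_1e_j$ and $x_2e_{j+1}=x_3e_j$, in total $2(a-b)$ of them, all minimal, in agreement with the term $A^{2(a-b)}(-b-1)$ of the resolution; the dimensions quoted in the theorem's statement are simply inconsistent with its own part (b), a slip in the paper rather than evidence of redundancy.
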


In particular, as a corollary of (ii) of Theorem \ref{thm}, it follows that

\begin{corollary}
	\label{main}
	There exists a locally linear resolution whose gap is arbitrary large. 
\end{corollary}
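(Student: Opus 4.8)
The plan is to read off the corollary directly from part (b) of Theorem~\ref{thm}, and then observe that the parameters $a,b$ can be chosen to push the gap to infinity. The essential point is that the displayed resolution in Theorem~\ref{thm}(b) already exhibits exactly the shape required by the definition of locally linear, once one tracks the twists.

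First I would match the resolution term by term against the definition. Writing $M=HR(C)$, the resolution is
\[
0 \to A(-a-2)^{a-b-1} \to A(-a-1)^{2(a-b)} \to A(-a)^{a-b+1}\oplus A(-b-2)^{a-b+1} \to A(-b-1)^{2(a-b)} \to A(-b)^{a-b-1} \to M \to 0,
\]
so $p=5$. Reading the twists of $F_0,\dots,F_5$ as $-b,\,-b-1,\,\{-a,-b-2\},\,-a-1,\,-a-2$, I set $d=b$, $e=a-2$, and $s=2$. With these choices $F_0=A(-d)^{\beta_0}$, $F_1=A(-d-1)^{\beta_1}$ are linear, the middle term $F_2=A(-d-2)^{\beta_{2,d+2}}\oplus A(-e-2)^{\beta_{2,e+2}}$ is the split term (here $-d-2=-b-2$ and $-e-2=-a$), and $F_3=A(-e-3)^{\beta_3}$, $F_4=A(-e-4)^{\beta_4}$ complete the linear strand at the top (since $-e-3=-a-1$ and $-e-4=-a-2$). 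Thus the hypotheses $0\le i<s$, $i=s$, $s<i\le p$ of the definition are met with $1<s=2<p-1=4$, and the resolution is locally linear.

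With the identification in hand the gap is $g=e-d=(a-2)-b=a-b-2$. The hypotheses of the construction only require $0<b\le a-2$ with $\gcd(a,b)=1$. Hence I would simply exhibit an explicit family making $a-b-2$ arbitrarily large: for instance take $b=1$ and let $a$ range over the even integers $a\ge 3$ (so that $\gcd(a,1)=1$ trivially and $b=1\le a-2$), giving gap $g=a-3\to\infty$. This produces, for each such $a$, a finitely generated positively graded $A$-module whose minimal free resolution is locally linear with gap $a-3$, and the corollary follows.

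The work is essentially all bookkeeping, so there is no serious analytic obstacle; the only thing to watch is the indexing in the definition of locally linear. The subtle point worth double-checking is that the middle twists $-a$ and $-b-2$ really do straddle the two linear strands in the intended way --- i.e.\ that $-d-2=-b-2$ is the continuation of the lower strand and $-e-2=-a$ is the start of the upper strand --- and that $s=2$ genuinely lies strictly between $1$ and $p-1=4$, which forces $p=5$ and hence requires invoking the five-term (rather than three-term) resolution of Theorem~\ref{thm}(b). Once these twists are verified the definition applies verbatim and no further computation is needed.
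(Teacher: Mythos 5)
Your proof is correct and is essentially the paper's own argument: the paper deduces Corollary~\ref{main} directly from the resolution in Theorem~\ref{thm}(b), exactly as you do, by identifying $d=b$, $e=a-2$, $s=2$, so the gap $g=a-b-2$ becomes arbitrarily large (e.g.\ $b=1$, $a\to\infty$). One indexing slip worth fixing: the resolution has five free modules $F_0,\dots,F_4$, so $p=4$ (not $p=5$); the condition $1<s<p-1$ then reads $1<2<3$, which still holds, so the conclusion is unaffected.
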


The present paper is organized as follows:  First we fix a sequence of elements $\underline{f} = (f_1,f_2,f_3,f_4)$ over a commutative Noetherian ring $A$. Second, based on some elementary linear algebra, we  define matrices $\mathcal{A}, \mathcal{A}', \mathcal{B}, \mathcal{C}, \mathcal{D}$ over $A$ of appropriate size and prove some annihilation between them   in Section $3$. As a consequence 
they provide  several complexes of free $A$-modules, among them 
\begin{gather*}
	C^{\bullet}: 0 \to A^{a-1} \stackrel{\mathcal{C}}{\longrightarrow} A^{2a}   \stackrel{\mathcal{A}}{\longrightarrow} A^{2(a+1)} 
	\stackrel{\mathcal{B}}{\longrightarrow} A^{2a}  \stackrel{\mathcal{D}}{\longrightarrow} A^{a-1} \\
	C^{\bullet}_1: 0 \to A^{a-1} \stackrel{\mathcal{C}}{\longrightarrow} A^{2a} \stackrel{\mathcal{A}'}{\longrightarrow} A^{a+1}.
\end{gather*}
Under some weak conditions these complexes are exact. 
Moreover,  in Section $5$, as an application of the previous constructions we study the monomial curve on the smooth quadric given by $Q = x_0x_3-x_1x_2$ and prove Theorem \ref{thm}. It would be a challenge to extend some of these results to larger sequences and arbitrary monomial curves in projective $3$-space.

\section{Notation and Preliminaries}
In the sequel let $A$ denote a commutative Noetherian ring. We shall fix $\underline{f} = f_1,f_2,f_3,f_4$ 
a sequence of four elements of $A$.  With these elements we shall define several matrices.

\begin{construction} \label{cons-1} (A):
	For a given positive  integer $a $ we consider the following four $a\times (a+1)$-matrices: 
	\[
	\mathcal{A}_{1,1} = 
	\begin{pmatrix}
		f_1 & -f_2 &  &  &  \\
		& f_1 & -f_2&  &  \\
		&         &     \ddots  &  \ddots &\\
		&          &       &              f_1 & -f_2
	\end{pmatrix} 
	\mathcal{A}_{1,2} = 
	\begin{pmatrix}
		f_3^{a-1} & 0 &  0 & 0& \ldots   & 0  &0 \\
		f_3^{a-2}f_4 & f_1f_3^{a-2}&  0 & 0& \ldots &0  & 0 \\
		f_3^{a-3}f_4^2 & f_1 f_3^{a-3}f_4 & f_1^2f_3^{a-3}& 0&\ldots &0  &0 \\
		\vdots & \vdots & \vdots & \vdots& \ddots & \vdots & \vdots\\
		f_4^{a-1} & f_1f_4^{a-2} & f_1^2f_4^{a-3} &f_1 ^3f_4^{a-4} & \ldots & f_1^{a-1} & 0
	\end{pmatrix} 
	\]
	\[
	\mathcal{A}_{2,1} = 
	\begin{pmatrix}
		&   &  & -f_4 &  f_3\\
		&   & -f_4&  f_3&  \\
		&       \reflectbox{$\ddots$}    &      \reflectbox{$\ddots$}   & &\\
		-f_4&    f_3     &       &      & 
	\end{pmatrix} 
	\mathcal{A}_{2,2} = 
	\begin{pmatrix}
		0 & 0 & \ldots& 0 & 0& 0& f_1^{a-1} \\
		0 & 0 & \ldots & 0 & 0& f_1^{a-2}f_3& f_1^{a-2} f_2 \\
		0 & 0 &\ldots & 0 & f_1^{a-3}f_3^2& f_1^{a-3}f_2f_3 & f_1^{a-3}f_2^2 \\
		\vdots & \vdots & \reflectbox{$\ddots$}  & \vdots & \vdots & \vdots & \vdots \\
		0 & f_3^{a-1}& \ldots&f_2^{a-4}  f_3^3 & f_2^{a-3}f_3^2& f_2^{a-2}f_3 & f_2^{a-1}
	\end{pmatrix}
	\]
	In a similar way we define the following four $a \times (a+1)$ matrices:
	\[
	\mathcal{B}_{1,1} = 
	\begin{pmatrix}
		f_3^{a-1} & 0 &  0 & 0& \ldots   & 0  &0 \\
		f_2f_3^{a-2} & f_1f_3^{a-2}&  0 & 0& \ldots &0  & 0 \\
		f_2^2f_3^{a-3} & f_1 f_2f_3^{a-3} & f_1^2f_3^{a-3}& 0&\ldots &0  &0 \\
		\vdots & \vdots & \vdots & \vdots& \ddots & \vdots & \vdots\\
		f_2^{a-1} & f_1f_2^{a-2} & f_1^2f_2^{a-3} &f_1 ^3f_2^{a-4} & \ldots & f_1^{a-1} & 0
	\end{pmatrix} 
	\mathcal{B}_{1,2} = 
	\begin{pmatrix}
		-f_1 & f_4 &  &  &  \\
		& -f_1 & f_4&  &  \\
		&         &     \ddots  &  \ddots &\\
		&          &       &            -f_1 & f_4
	\end{pmatrix} 
	\]
	\[
	\mathcal{B}_{2,1} = 
	\begin{pmatrix}
		0 & 0 & \ldots& 0 & 0& 0& f_1^{a-1} \\
		0 & 0 & \ldots & 0 & 0& f_1^{a-2}f_3& f_1^{a-2} f_4 \\
		0 & 0 &\ldots & 0 & f_1^{a-3}f_3^2& f_1^{a-3}f_3f_4 & f_1^{a-3}f_4^2 \\
		\vdots & \vdots & \reflectbox{$\ddots$}  & \vdots & \vdots & \vdots & \vdots \\
		0 & f_3^{a-1}& \ldots& f_3^{a-4}f_4^3  & f_3^{a-3}f_4^2& f_3^{a-2}f_4 & f_4^{a-1}
	\end{pmatrix}
	\mathcal{B}_{2,2} = 
	\begin{pmatrix}
		&   &  & f_2 &  -f_3\\
		&   & f_2&  -f_3&  \\
		&       \reflectbox{$\ddots$}    &      \reflectbox{$\ddots$}   & &\\
		f_2&   -f_3     &       &      & 
	\end{pmatrix} 
	\]
	With these notation we perform the following $2a \times 2(a+1)$-matrix $\mathcal{A}$ and the $2(a+1)\times 2a$-matrix $\mathcal{B}$ 
	\[
	\mathcal{A} = \begin{pmatrix}
		\mathcal{A}_{1,1} & \mathcal{A}_{1,2} \\
		\mathcal{A}_{2,1} & \mathcal{A}_{2,2}
	\end{pmatrix} \text{ and }
	\mathcal{B} = 
	\begin{pmatrix}
		\mathcal{B}_{1,1} & \mathcal{B}_{1,2}\\
		\mathcal{B}_{2,1} & \mathcal{B}_{2,2}
	\end{pmatrix}^T .
	\]
Moreover, we  introduce the  $2a \times (a+1)$-matrices $\mathcal{A}', \mathcal{A}''$ and  the 
$(a+1)\times 2a$-matrices $\mathcal{B}', \mathcal{B}''$ as follows
\[
\mathcal{A}' = \begin{pmatrix}
	\mathcal{A}_{1,1} \\
	\mathcal{A}_{2,1}
\end{pmatrix}, 
\mathcal{A}'' = \begin{pmatrix}
	\mathcal{A}_{1,2} \\
	\mathcal{A}_{2,2}
\end{pmatrix}, 
\mathcal{B}' = 
\begin{pmatrix}
	\mathcal{B}_{1,2} \\
	\mathcal{B}_{2,2}
\end{pmatrix}^T
\text{ and }
\mathcal{B}'' = 
\begin{pmatrix}
	\mathcal{B}_{1,1} \\
	\mathcal{B}_{2,1}
\end{pmatrix}^T.
\]
\end{construction}

\begin{construction} \label{cons-2} (B): In the next step we focus the attention of the following two $(a-1)\times 2a$-matrices:
	\[
	\mathcal{C}= 
	\begin{pmatrix}
		f_4& -f_3 &  &  &  &   &  & -f_2 &  f_1\\
		& f_4 & -f_3&  &  &   & -f_2&  f_1&  \\
		&         &     \ddots  &  \ddots &&       \reflectbox{$\ddots$}    &      \reflectbox{$\ddots$}   & &\\
		&          &       &              f_4 & -f_3\,-f_2&    f_1     &       &      & 
	\end{pmatrix}
	\]
	\[
	\mathcal{D}^T = 
	\begin{pmatrix}
		-f_2& f_3 &  &  &  &   &  & f_4 &  -f_1\\
		& -f_2 & f_3&  &  &   & f_4&  -f_1&  \\
		&         &     \ddots  &  \ddots &&       \reflectbox{$\ddots$}    &      \reflectbox{$\ddots$}   & &\\
		&          &       &              -f_2 & f_3 \quad f_4&    -f_1     &       &      & 
	\end{pmatrix}.
	\]
\end{construction}

In the sequel we will need the following substitution. 

\begin{remark} \label{rem-1}
	In case of the substitution $(f_1,f_2,f_3,f_4) \mapsto (-f_1,-f_4,-f_3,-f_2)$ it follows that 
	\[
	\mathcal{C} \cong \mathcal{D}^T, \mathcal{A}_{1,1} \cong \mathcal{B}_{1,2}  \mbox{ and }  \mathcal{A}_{2,1} \cong \mathcal{B}_{2,2}
	\] 
	and therefore also $\mathcal{A} \cong \mathcal{B}^T$.
\end{remark}

For our investigations we fix the following convention. 

\begin{notation} \label{not-1}
	During the paper we consider vectors as row vectors. Multiplication by matrices is from the right. 
\end{notation}

\section{Annihilation}
With these matrices we have the following property:

\begin{lemma} \label{lem-1}
	It holds $\mathcal{A} \cdot \mathcal{B} = 0$ and also   $\mathcal{A}' \cdot \mathcal{B}' = 0$.
\end{lemma}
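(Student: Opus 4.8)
The plan is to reduce both matrix equations to a handful of identities between the elementary blocks introduced in Construction~\ref{cons-1}, and then to verify each of these by a direct entrywise computation that exploits the fact that every block is either bidiagonal or triangular with monomial entries. Construction~\ref{cons-1} already exhibits $\mathcal{A}$ as the horizontal join $(\mathcal{A}'\mid\mathcal{A}'')$ and $\mathcal{B}$ as the vertical join of its two block rows $\mathcal{B}''$ and $\mathcal{B}'$, so that $\mathcal{A}\cdot\mathcal{B}=\mathcal{A}'\mathcal{B}''+\mathcal{A}''\mathcal{B}'$. Writing this product as a $2\times2$ array of $a\times a$ blocks reduces $\mathcal{A}\cdot\mathcal{B}=0$ to the four identities
\[
\mathcal{A}_{1,1}\mathcal{B}_{1,1}^{T}+\mathcal{A}_{1,2}\mathcal{B}_{1,2}^{T}=0,\qquad
\mathcal{A}_{1,1}\mathcal{B}_{2,1}^{T}+\mathcal{A}_{1,2}\mathcal{B}_{2,2}^{T}=0,
\]
\[
\mathcal{A}_{2,1}\mathcal{B}_{1,1}^{T}+\mathcal{A}_{2,2}\mathcal{B}_{1,2}^{T}=0,\qquad
\mathcal{A}_{2,1}\mathcal{B}_{2,1}^{T}+\mathcal{A}_{2,2}\mathcal{B}_{2,2}^{T}=0.
\]

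Each of these is a sum of two products of a bidiagonal block with a triangular (monomial) block, and I would evaluate them entry by entry. Because one factor is bidiagonal, the $(i,j)$ entry of each product is a two-term expression; for instance the $(i,j)$ entry of $\mathcal{A}_{1,1}\mathcal{B}_{1,1}^{T}$ equals $f_1\,(\mathcal{B}_{1,1}^{T})_{i,j}-f_2\,(\mathcal{B}_{1,1}^{T})_{i+1,j}$, and the monomial shape $f_1^{\,j-1}f_2^{\,i-j}f_3^{\,a-i}$ of the entries of $\mathcal{B}_{1,1}$ makes these two terms coincide for all $j>i$, so they cancel, while the diagonal survives with value $f_1^{\,i}f_3^{\,a-i}$. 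The partner product $\mathcal{A}_{1,2}\mathcal{B}_{1,2}^{T}$ is diagonal with the opposite entries $-f_1^{\,i}f_3^{\,a-i}$, whence their sum vanishes; the three remaining block identities reduce to the same kind of telescoping cancellation.

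The substitution $(f_1,f_2,f_3,f_4)\mapsto(-f_1,-f_4,-f_3,-f_2)$ of Remark~\ref{rem-1} sends $\mathcal{A}$ to $\mathcal{B}^{T}$ and, being an involution, $\mathcal{B}$ to $\mathcal{A}^{T}$, hence carries the product $\mathcal{A}\cdot\mathcal{B}$ to its own transpose; this identifies the $(1,2)$ and $(2,1)$ block identities above (up to transposition), leaving only three of the four to be checked, and these are of just two structural types, a diagonal cancellation and an off-diagonal one. I expect the second equation $\mathcal{A}'\cdot\mathcal{B}'=0$ to yield to the identical strategy: it is again a $2\times2$ block product of the same bidiagonal and triangular blocks, and I would establish it by the same entrywise cancellation, invoking Remark~\ref{rem-1} once more to recycle the computations already carried out for $\mathcal{A}\cdot\mathcal{B}$ rather than repeating them.

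The point I expect to be delicate is not the mechanism but the bookkeeping. The blocks $\mathcal{A}_{1,2},\mathcal{A}_{2,2},\mathcal{B}_{1,1},\mathcal{B}_{2,1}$ carry doubly indexed monomial entries, and $\mathcal{A}_{2,1},\mathcal{B}_{2,2}$ are bidiagonal in the antidiagonal direction; to make the telescoping complete one must keep careful track of the exact ranges of the summation indices and of the boundary columns (each block is $a\times(a+1)$, so its last column vanishes on one side), so that no stray boundary term is left over. Once these index ranges are pinned down, every required block identity collapses to the cancellation of two equal monomials, and both equalities of the lemma follow.
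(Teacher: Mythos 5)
Your treatment of the first identity $\mathcal{A}\cdot\mathcal{B}=0$ is correct and is essentially the paper's own proof: multiplying the $i$-th row of $\mathcal{A}$ by the $j$-th column of $\mathcal{B}$ for $1\le i,j\le a$, as the paper does, is exactly your block identity $\mathcal{A}_{1,1}\mathcal{B}_{1,1}^{T}+\mathcal{A}_{1,2}\mathcal{B}_{1,2}^{T}=0$ read entry by entry, and your cancellation (both summands are diagonal matrices with opposite entries $\pm f_1^{i}f_3^{a-i}$) is the computation carried out there; the remaining blocks are treated in the paper ``in a corresponding way''. Your use of Remark~\ref{rem-1} to identify the $(1,2)$ and $(2,1)$ block identities up to transposition is a legitimate refinement, since the substitution is indeed an involution.

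The genuine gap is your handling of the second identity, and it cannot be closed. You assert that $\mathcal{A}'\cdot\mathcal{B}'=0$ ``yields to the identical strategy'', but the structure is different from what you describe: since $\mathcal{A}'=\begin{pmatrix}\mathcal{A}_{1,1}\\ \mathcal{A}_{2,1}\end{pmatrix}$ and $\mathcal{B}'=\begin{pmatrix}\mathcal{B}_{1,2}^{T} & \mathcal{B}_{2,2}^{T}\end{pmatrix}$, one has
\[
\mathcal{A}'\cdot\mathcal{B}'=\begin{pmatrix}\mathcal{A}_{1,1}\mathcal{B}_{1,2}^{T} & \mathcal{A}_{1,1}\mathcal{B}_{2,2}^{T}\\ \mathcal{A}_{2,1}\mathcal{B}_{1,2}^{T} & \mathcal{A}_{2,1}\mathcal{B}_{2,2}^{T}\end{pmatrix},
\]
so every block is a \emph{single} product of two bidiagonal blocks: no triangular blocks occur, and there is no partner summand against which a telescoping cancellation could take place. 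In fact the product is not zero: its $(1,1)$ entry is
\[
(f_1,-f_2,0,\ldots,0)\cdot(-f_1,f_4,0,\ldots,0)^{T}=-f_1^{2}-f_2f_4,
\]
which does not vanish when $f_1,\ldots,f_4$ are indeterminates, for any $a\ge 2$ (the same obstruction appears if one multiplies in the opposite order $\mathcal{B}'\cdot\mathcal{A}'$). Had you carried out the computation you deferred, you would have hit this immediately. For what it is worth, the paper is in the same position: its proof verifies only $\mathcal{A}\cdot\mathcal{B}=0$ and never returns to $\mathcal{A}'\cdot\mathcal{B}'$, and this second assertion is never used later, since the complexes $C_1^{\bullet}$ and $C_2^{\bullet}$ of Theorem~\ref{res-1} only need $\mathcal{C}\cdot\mathcal{A}'=0$ and $\mathcal{B}'\cdot\mathcal{D}=0$, which follow from Lemma~\ref{lem-2}. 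The correct conclusion of a careful write-up is therefore that the first assertion of the lemma holds, while the second, as literally stated, is false as a universal identity in $f_1,f_2,f_3,f_4$.
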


\begin{proof}
	 To this end let $\mathfrak{a}_i$ denote the $i$-th row of $\mathcal{A}$ and 
	$\mathfrak{b}_j$ the $j$-th column of $\mathcal{B}$. First let $1 \leq i, j \leq a$. Then 
	\[
	\mathfrak{a}_i = (\underbrace{0, \ldots, 0,f_1, -f_2, 0, \ldots 0}_{a+1}, \underbrace{f_3^{a-i}f_4^{i-1}, f_1f_3^{a-i}f_4^{i-2}, f_1^2f_3^{a-i}f_4^{i-3}, \ldots, 
	f_1^{i-1}f_3^{a-i},0, \ldots, 0}_{a+1})
	\] 
	\[
	\mathfrak{b}_j = (\underbrace{f_2^{j-1}f_3^{a-j},f_1f_2^{j-2}f_3^{a-j}, f_1^2f_2^{j-3}f_3^{a-j}, \ldots,f_1^{j-1}f_3^{a-j}, 0, \ldots,0}_{a+1},\underbrace{0, \ldots,0,-f_1,f_4,0, \ldots, 0}_{a+1})^T.
	\]
	Then $\mathfrak{a}_i  \mathfrak{b}_j ^T = 0$. The vanishing for the remaining pairs of indices follows in a corresponding way. 
\end{proof}

In the next step we focus the attention of the following two $(a-1)\times 2a$-matrices 
$\mathcal{C}$ and $\mathcal{D}^T$ as defined above. 

\begin{lemma} \label{lem-2}
	We claim that $\mathcal{C} \cdot \mathcal{A} = 0$ and $\mathcal{B} \cdot \mathcal{D} = 0$. 
\end{lemma}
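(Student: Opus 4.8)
The plan is to prove $\mathcal{C}\cdot\mathcal{A}=0$ directly, by a row-times-column computation in the spirit of the proof of Lemma \ref{lem-1}, and then to obtain $\mathcal{B}\cdot\mathcal{D}=0$ essentially for free from the substitution recorded in Remark \ref{rem-1}. First I would write out the $i$-th row $\mathfrak{c}_i$ of $\mathcal{C}$ for $1\le i\le a-1$: it carries $f_4$ in the left block at position $i$ and $-f_3$ at position $i+1$, and in the right block $-f_2$ at position $2a-i$ and $f_1$ at position $2a-i+1$. Thus $\mathfrak{c}_i\cdot\mathcal{A}$ is the combination $f_4(\text{row }i)-f_3(\text{row }i+1)-f_2(\text{row }2a-i)+f_1(\text{row }2a-i+1)$ of the rows of $\mathcal{A}$. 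Because $\mathcal{A}$ splits into the two block columns $\bigl(\mathcal{A}_{1,1},\mathcal{A}_{2,1}\bigr)$ and $\bigl(\mathcal{A}_{1,2},\mathcal{A}_{2,2}\bigr)$, the vanishing of $\mathfrak{c}_i\cdot\mathcal{A}$ separates into two independent identities.

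In the left (linear) block the combination $f_4(\mathcal{A}_{1,1})_i-f_3(\mathcal{A}_{1,1})_{i+1}-f_2(\mathcal{A}_{2,1})_{a-i}+f_1(\mathcal{A}_{2,1})_{a-i+1}$ is supported on only three columns $i,i+1,i+2$, and one reads off the cancellations $f_1f_4-f_1f_4=0$ in column $i$, $(-f_2f_4-f_1f_3)+(f_2f_4+f_1f_3)=0$ in column $i+1$, and $f_2f_3-f_2f_3=0$ in column $i+2$; this part is immediate. The substantive part is the companion identity $f_4(\mathcal{A}_{1,2})_i-f_3(\mathcal{A}_{1,2})_{i+1}-f_2(\mathcal{A}_{2,2})_{a-i}+f_1(\mathcal{A}_{2,2})_{a-i+1}=0$ in the right (polynomial) block. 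Here the entries are the staircase monomials $(\mathcal{A}_{1,2})_{i,j}=f_1^{j-1}f_3^{a-i}f_4^{i-j}$ and the analogous monomials of $\mathcal{A}_{2,2}$, so the argument becomes an exponent bookkeeping: multiplying by $f_4$ (resp. $-f_3$) shifts an $\mathcal{A}_{1,2}$-monomial into a monomial of the same multidegree as one arising from the $f_2,f_1$-multiples of the two relevant $\mathcal{A}_{2,2}$-rows, and these must cancel in pairs, column by column. I expect this monomial matching to be \emph{the main obstacle}, since it is precisely where the two differently shaped staircase blocks have to interlock; the cleanest route is to fix a target column, note that at most two monomials survive on each side, and compare them. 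The remaining choices of block indices in $\mathcal{C}$ and $\mathcal{A}$ are treated in exactly the same way, which finishes $\mathcal{C}\cdot\mathcal{A}=0$.

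For $\mathcal{B}\cdot\mathcal{D}=0$ I would avoid a second computation and instead transport the first. Applying the ring substitution $\sigma\colon(f_1,f_2,f_3,f_4)\mapsto(-f_1,-f_4,-f_3,-f_2)$ to the identity $\mathcal{C}\cdot\mathcal{A}=0$, and using that $\sigma$ commutes with matrix multiplication, gives $\sigma(\mathcal{C})\cdot\sigma(\mathcal{A})=0$. By Remark \ref{rem-1} one has $\sigma(\mathcal{C})=\mathcal{D}^T$ exactly, while $\sigma(\mathcal{A})$ agrees with $\mathcal{B}^T$ after interchanging its two blocks of $a+1$ columns and a global sign $(-1)^{a-1}$ on the polynomial block (all entries of $\mathcal{A}_{1,2},\mathcal{A}_{2,2}$ having total degree $a-1$); in other words $\sigma(\mathcal{A})=\mathcal{B}^T R$ for an invertible matrix $R$ effecting this signed block swap. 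Hence $\mathcal{D}^T\cdot\mathcal{B}^T R=0$, and cancelling the invertible $R$ on the right yields $\mathcal{D}^T\cdot\mathcal{B}^T=0$. Transposing gives $\mathcal{B}\cdot\mathcal{D}=0$, which completes the proof. The only point requiring care in this last step is checking that the discrepancy between $\sigma(\mathcal{A})$ and $\mathcal{B}^T$ is a \emph{right} (column) operation, so that it may be cancelled without disturbing the left factor $\mathcal{D}^T$; this holds because it consists solely of a column permutation and column sign changes.
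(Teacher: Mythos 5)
Your proposal is correct and follows essentially the same route as the paper: a direct block-by-block verification of $\mathcal{C}\cdot\mathcal{A}=0$ (the paper phrases the cancellations as Koszul relations in the separate blocks and computes explicitly only the exceptional product of the last row of $\mathcal{C}$ with the $(2a+1)$-th column of $\mathcal{A}$), and then $\mathcal{B}\cdot\mathcal{D}=0$ via the substitution of Remark \ref{rem-1}, which the paper compresses to ``follows by symmetry'' and which you rightly make precise through the invertible signed column-block swap $R$. One minor inaccuracy that does not affect the argument: in the right-hand block the cancellation is mostly telescoping \emph{within} each staircase block ($f_4$ times row $i$ of $\mathcal{A}_{1,2}$ against $f_3$ times row $i+1$, and likewise inside $\mathcal{A}_{2,2}$), leaving exactly one cross-block cancellation per row, in column $i+1$, rather than a column-by-column matching of $\mathcal{A}_{1,2}$-monomials against $\mathcal{A}_{2,2}$-monomials as you describe; your fallback plan (fix a column and compare the at most two surviving monomials) does go through.
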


\begin{proof}
	We start with the first product. With the exception of the 
	product of the $a$-th row $\mathfrak{c}$ of $\mathcal{C}$ with the $(2a+1)$-th column $\mathfrak{a}$  of $\mathcal{A}$ all zeros 
	in the product matrix occur as Koszul relations in the corresponding separate blocks.  Moreover 
	\[
	\mathfrak{c} = (\underbrace{0, \ldots , 0, f_4, -f_3}_a, \underbrace{-f_2,f_1,0, \ldots, 0}_a) \text{ and }
	\mathfrak{a}^T = (\underbrace{0,\ldots, 0, f_1^{a-1}}_a, \underbrace{0,f_1^{a-2}f_3, 
		\ldots, f_2^{a-2}f_3}_a)
	\]
	and therefore $\mathfrak{c} \mathfrak{a}^T = 0$. The vanishing of $\mathcal{B} \mathcal{D} = 0$ follows by symmetry. 
\end{proof}

\section{The complexes}
 As shown in Lemma \ref{lem-2} we have that 
 $$
 \mathcal{C} \cdot \mathcal{A}' =  \mathcal{C} \cdot \mathcal{A}'' = 0 \mbox{ and }\;  \mathcal{B}' \cdot \mathcal{D} 
 = \mathcal{B}'' \cdot \mathcal{D} = 0.
 $$ 
 This lead to the following four complexes: 
 
 \begin{theorem} \label{res-1}
 	With the previous notation  $M := \Coker \mathcal{A}'$ and $N := \Coker \mathcal{D}$. Then we have  the following two complexes
 	\[
 	C^{\bullet}_1: 0 \to A^{a-1} \stackrel{\mathcal{C}}{\longrightarrow} A^{2a} \stackrel{\mathcal{A}'}{\longrightarrow} A^{a+1}\;  \text{ and } \quad
 	C^{\bullet}_2: 0 \to A^{a+1} \stackrel{\mathcal{B}'}{\longrightarrow} A^{2a} \stackrel{\mathcal{D}}{\longrightarrow} A^{a-1} 
 	\]
 	\begin{itemize}
 		\item[(a)] $C_1^{\bullet}$ is exact if and only if  $\grade (f_1,f_2,f_3,f_4;A) \geq 2$ and $f_1f_4-f_2f_3$ is a 
 		non-zero divisor.
 		\item[(b)] $C^{\bullet}_2 \cong \Hom_A(C^{\bullet}_1,A)$ and $H^2(C^{\bullet}_2) \cong N$. 
 		\item[(c)] If $C^{\bullet}_1$ is exact, then 
 		$H^i(C^{\bullet}_2) \cong \Ext^i_A(M,A)$ and $\Ext^2_A(M,A) \cong N$.
 	\end{itemize}
 \end{theorem}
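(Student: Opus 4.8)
The plan is to establish the three parts in order, exploiting the symmetry encoded in Remark~\ref{rem-1} to reduce part of the work to a single case. First consider part (a). The complex $C^{\bullet}_1$ reads $0 \to A^{a-1} \stackrel{\mathcal{C}}{\to} A^{2a} \stackrel{\mathcal{A}'}{\to} A^{a+1}$, and by Lemma~\ref{lem-2} it is indeed a complex. To decide exactness I would invoke the Buchsbaum--Eisenbud acyclicity criterion: a finite free complex is exact if and only if for each map the rank of its matrix plus the rank of the next equals the module rank, and the grade of the appropriate determinantal ideal $I_{r_i}(\varphi_i)$ is at least $i$. Here the ranks should come out as $\operatorname{rank} \mathcal{C} = a-1$ and $\operatorname{rank} \mathcal{A}' = a+1$, consistent with $(a-1) + (a+1) = 2a$ at the middle. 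The condition that $f_1f_4 - f_2f_3$ is a non-zero divisor will guarantee that $\mathcal{A}'$ has the full rank $a+1$ (its maximal minors should, up to sign and powers of the $f_i$, involve this $2\times 2$ determinant), giving exactness at $A^{a+1}$, i.e. injectivity is grade~$1$; and $\grade(f_1,f_2,f_3,f_4;A) \geq 2$ will supply the grade-$2$ condition needed for exactness at the middle spot $A^{2a}$. The converse direction follows by localizing: if either hypothesis fails one produces a prime where the localized complex cannot be exact.

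For part (b), the key observation is formal. Dualizing $C^{\bullet}_1$ by applying $\Hom_A(-,A)$ turns the maps into their transposes. Because vectors are row vectors with right multiplication (Notation~\ref{not-1}), the dual of the map given by right multiplication by $\mathcal{A}'$ is right multiplication by $(\mathcal{A}')^T$, and similarly for $\mathcal{C}$. Comparing $(\mathcal{A}')^T$ and $\mathcal{C}^T$ against the matrices $\mathcal{B}'$ and $\mathcal{D}$ defining $C^{\bullet}_2$, I would check directly from Construction~\ref{cons-1} and Construction~\ref{cons-2} that the transposed complex is precisely $C^{\bullet}_2$ up to the identification of free modules; this uses that $\mathcal{B}'$ is defined as a transpose in Construction~\ref{cons-1}. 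Once $C^{\bullet}_2 \cong \Hom_A(C^{\bullet}_1, A)$ is established, the identity $H^2(C^{\bullet}_2) \cong N$ is immediate, since $C^{\bullet}_2$ ends in $A^{a-1}$ via $\mathcal{D}$ and $N = \Coker \mathcal{D}$ by definition, so the cohomology at the last spot is exactly the cokernel of $\mathcal{D}$.

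Part (c) is the payoff and follows from (a) and (b) together. If $C^{\bullet}_1$ is exact, then since $M = \Coker \mathcal{A}'$, the complex $C^{\bullet}_1$ augmented by $M$ is the beginning of a free resolution of $M$ (exactness at $A^{2a}$ and at $A^{a-1}$, where the leftmost map $\mathcal{C}$ is injective by the grade condition). Applying $\Hom_A(-,A)$ and taking cohomology of the dual complex computes $\Ext^i_A(M,A)$ in the usual way; by part (b) this dual complex is $C^{\bullet}_2$, whence $H^i(C^{\bullet}_2) \cong \Ext^i_A(M,A)$. In particular $\Ext^2_A(M,A) = H^2(C^{\bullet}_2) \cong N$ by the last clause of (b). The main obstacle I anticipate is the rank-and-grade bookkeeping in part (a): verifying that the maximal minors of $\mathcal{A}'$ generate an ideal whose radical matches $(f_1f_4 - f_2f_3)$ up to the ideal $(f_1,f_2,f_3,f_4)$, and pinning down exactly which determinantal ideals carry the grade hypotheses in the Buchsbaum--Eisenbud criterion. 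The careful interplay between the two hypotheses---one controlling injectivity at the right end and one controlling exactness in the middle---is where the proof must be handled with precision rather than by symmetry alone.
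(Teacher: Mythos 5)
Your proposal follows essentially the same route as the paper: part (a) by the Buchsbaum--Eisenbud acyclicity criterion, whose input is exactly the determinantal ideals the paper computes, namely $I_{a-1}(\mathcal{C}) = (f_1,f_2,f_3,f_4)^{a-1}$ and $I_{a+1}(\mathcal{A}') = (f_1,f_2,f_3,f_4)^{a-1}(f_1f_4-f_2f_3)$, and parts (b), (c) by the transpose identifications $\mathcal{C}^T \cong \mathcal{D}$ and $(\mathcal{A}')^T \cong \mathcal{B}'$ from Remark \ref{rem-1} together with the formal identification of $H^i(C^{\bullet}_2)$ with $\Ext^i_A(M,A)$. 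The only caveat is cosmetic: in (a) you attach the non-zero-divisor condition to ``exactness at $A^{a+1}$'' (the complex claims no exactness there) when in the criterion the grade-$1$ condition on $I_{a+1}(\mathcal{A}')$ pertains to the middle spot and the grade-$2$ condition on $I_{a-1}(\mathcal{C})$ to injectivity of $\mathcal{C}$, but since the criterion is applied as a single package this mislabeling does not affect correctness.
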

 
 \begin{proof} For the proof of (a) it is easily seen that 
 	$$
 	 I_{a-1}(\mathcal{C}) = (f_1,f_2,f_3,f_4)^{a-1}  \text{ and }  I_{a+1}(\mathcal{A}') = (f_1,f_2,f_3,f_4)^{a-1}(f_1f_4-f_2f_3).
 	$$
 	By the Buchbaum-Eisenbud acyclicity criterion (see e.g. \cite{Ed}) it follows that $	C^{\bullet}_1$ is exact 
 	if and only if  $\grade (f_1,f_2,f_3,f_4;A) \geq 2$ and $f_1f_4-f_2f_3$ is a 
 	non-zero divisor.
 	
 	For the proof of (b) first recall that $\mathcal{C}^T \cong \mathcal{D}$ and $(\mathcal{A}')^T \cong \mathcal{B}'$ as easily seen 
 	(see \ref{rem-1}).  Then the proof of the statement (c) follows by definitions. 
  \end{proof}
  
  There is a similar construction of complexes as in \ref{res-1}.
  
 \begin{theorem} \label{res-2}
 	With the previous notation  et $M':= \Coker \mathcal{A}''$ and $N := \Coker \mathcal{D}$ as above. Then we have  the following two complexes
 	\[
 	D^{\bullet}_1: 0 \to A^{a-1} \stackrel{\mathcal{C}}{\longrightarrow} A^{2a} \stackrel{\mathcal{A}''}{\longrightarrow} A^{a+1}\;  \text{ and } \quad
 	D^{\bullet}_2: 0 \to A^{a+1} \stackrel{\mathcal{B}''}{\longrightarrow} A^{2a} \stackrel{\mathcal{D}}{\longrightarrow} A^{a-1}. 
 	\]
 	\begin{itemize}
 		\item[(a)] $D_1^{\bullet}$ is exact if and only if  $f_1f_3$ is a non zero divisor.. 
 		\item[(b)]  $D^{\bullet}_2 \cong \Hom_A(D^{\bullet}_1,A)$ and $H^2(D^{\bullet}_2 )\cong N$.
 		\item[(c)] If $D^{\bullet}_1$ is exact, then 
 		$H^i(D^{\bullet}_2) \cong \Ext^i_A(M',A) $ and $ \Ext_A^2(M',A) \cong N$. 
 	\end{itemize}
 \end{theorem}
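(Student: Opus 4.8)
The plan is to follow the proof of Theorem~\ref{res-1} almost verbatim, the only change being that the second column block $\mathcal{A}''$ now plays the role of $\mathcal{A}'$. For part~(a) I would apply the Buchsbaum--Eisenbud acyclicity criterion to $D_1^{\bullet}\colon 0 \to A^{a-1} \stackrel{\mathcal{C}}{\longrightarrow} A^{2a} \stackrel{\mathcal{A}''}{\longrightarrow} A^{a+1}$. The expected ranks are $a-1$ for $\mathcal{C}$ and $a+1$ for $\mathcal{A}''$ (both maximal), so the complex is acyclic exactly when $\grade I_{a-1}(\mathcal{C}) \ge 2$ and $\grade I_{a+1}(\mathcal{A}'') \ge 1$. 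Since $\mathcal{C}$ is unchanged, the first ideal is $I_{a-1}(\mathcal{C}) = (f_1,f_2,f_3,f_4)^{a-1}$, recorded already in Theorem~\ref{res-1}; its grade is $\ge 2$ iff $\grade(f_1,f_2,f_3,f_4)\ge 2$. Hence the whole novelty of~(a) sits in the determinantal ideal $I_{a+1}(\mathcal{A}'')$.

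The evaluation of $I_{a+1}(\mathcal{A}'')$ is the step I expect to be the main obstacle. Here I would exploit the block shape $\mathcal{A}''=\binom{\mathcal{A}_{1,2}}{\mathcal{A}_{2,2}}$: the rows from $\mathcal{A}_{1,2}$ are supported in the first $a$ columns and form a lower triangular array with diagonal entries $f_1^{i-1}f_3^{a-i}$, while the rows from $\mathcal{A}_{2,2}$ are supported in the last $a$ columns with anti-diagonal entries $f_1^{a-i}f_3^{i-1}$. A careful Laplace expansion along these two triangular patterns should show that every $(a+1)$-minor is divisible by $f_1f_3$, so $I_{a+1}(\mathcal{A}'')\subseteq (f_1f_3)$; on the other hand the minor formed from all $a$ rows of $\mathcal{A}_{1,2}$ together with the top row $(0,\dots,0,f_1^{a-1})$ of $\mathcal{A}_{2,2}$ equals $\pm\, f_1^{\binom{a}{2}+a-1}f_3^{\binom{a}{2}}$, a genuine monomial in $f_1$ and $f_3$. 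Since any multiple of a zero divisor is again a zero divisor, these two facts force $\grade I_{a+1}(\mathcal{A}'')\ge 1 \Leftrightarrow f_1f_3 \text{ is a non-zerodivisor}$. This delivers the criterion of~(a). I would flag one delicate point: exactly as in Theorem~\ref{res-1}, the grade-$2$ condition $\grade(f_1,f_2,f_3,f_4)\ge 2$ coming from $\mathcal{C}$ must also be in force, so the bare statement ``$f_1f_3$ a non-zerodivisor'' should be read under that standing hypothesis (it holds automatically in the curve application of Section~5).

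Parts~(b) and~(c) are then formal. For~(b) I would dualize $D_1^{\bullet}$: with the row-vector convention of Notation~\ref{not-1}, $\Hom_A(D_1^{\bullet},A)$ is $0 \to A^{a+1} \stackrel{(\mathcal{A}'')^T}{\longrightarrow} A^{2a} \stackrel{\mathcal{C}^T}{\longrightarrow} A^{a-1}$. Applying the substitution $(f_1,f_2,f_3,f_4)\mapsto(-f_1,-f_4,-f_3,-f_2)$ of Remark~\ref{rem-1} gives $\mathcal{C}^T\cong \mathcal{D}$, and the same entrywise comparison that yields $\mathcal{A}_{1,1}\cong\mathcal{B}_{1,2}$, $\mathcal{A}_{2,1}\cong\mathcal{B}_{2,2}$ there also yields $\mathcal{A}_{1,2}\cong\mathcal{B}_{1,1}$ and $\mathcal{A}_{2,2}\cong\mathcal{B}_{2,1}$ (each up to the global sign $(-1)^{a-1}$), hence $(\mathcal{A}'')^T\cong\mathcal{B}''$. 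Therefore $D_2^{\bullet}\cong\Hom_A(D_1^{\bullet},A)$, and reading the cohomology at the right-hand end gives $H^2(D_2^{\bullet})=\Coker\mathcal{D}=N$. For~(c), once $D_1^{\bullet}$ is exact it extends to a free resolution $0 \to A^{a-1} \to A^{2a} \to A^{a+1} \to M' \to 0$ of $M'=\Coker\mathcal{A}''$; applying $\Hom_A(-,A)$ to the deleted resolution reproduces $D_2^{\bullet}$ by~(b), whence $H^i(D_2^{\bullet})\cong\Ext^i_A(M',A)$ and in particular $\Ext^2_A(M',A)\cong H^2(D_2^{\bullet})\cong N$.
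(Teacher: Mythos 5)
Your proposal is correct and takes essentially the same route as the paper: the Buchsbaum--Eisenbud acyclicity criterion applied with $I_{a-1}(\mathcal{C})=(f_1,f_2,f_3,f_4)^{a-1}$ and with $I_{a+1}(\mathcal{A}'')$ identified (up to radical) with $(f_1f_3)$, followed by dualization via the substitution of Remark \ref{rem-1} for parts (b) and (c), where the paper's proof is simply the terse remark that $\Rad I_{a+1}(\mathcal{A}'')=\Rad I_{a+1}(\mathcal{B}'')=\Rad(f_1f_3)$ ``by similar arguments.'' Your sharper inclusions $\bigl(f_1^{\binom{a}{2}+a-1}f_3^{\binom{a}{2}}\bigr)\subseteq I_{a+1}(\mathcal{A}'')\subseteq (f_1f_3)$ and your flag that the standing hypothesis $\grade(f_1,f_2,f_3,f_4;A)\ge 2$ must accompany ``$f_1f_3$ is a non-zerodivisor'' are both correct and are improvements in precision: the paper's literal statement of (a) omits that grade condition, even though its own invocation of Buchsbaum--Eisenbud (because of the factor $\mathcal{C}$) requires it exactly as yours does.
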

 
 \begin{proof}
 	Clearly $\mathcal{C} \cdot \mathcal{A}'' = \mathcal{B}'' \cdot \mathcal{D} = 0$. This provides the complexes 
 	$D^{\bullet}_1$ and $D^{\bullet}_2$. 
 	As shown before  $I_{a-1}(\mathcal{C}) = I_{a-1}(\mathcal{D}) =  (f_1,f_2,f_3,f_4)^{a-1}$. By similar arguments 
 	it follows that $\Rad I_{a+1} (\mathcal{A}'') = \Rad I_{a+1} (\mathcal{B}'') =\Rad (f_1f_3)$ and (a) and (b) follow. The statements in (c) are consequences of the definitions.
 \end{proof}
  
  With the previous matrices there is another complex.
 
\begin{theorem} \label{res-3}
	Let $\underline{f} = f_1,f_2,f_3,f_4$ denote a sequence of four elements of a commutative ring $A$ and let $N := \Coker \mathcal{D}$. 
	Then there is a complex of free $A$-modules 
	\[
	C^{\bullet}: 0 \to A^{a-1} \stackrel{\mathcal{C}}{\longrightarrow} A^{2a}   \stackrel{\mathcal{A}}{\longrightarrow} A^{2(a+1)} 
	 \stackrel{\mathcal{B}}{\longrightarrow} A^{2a}  \stackrel{\mathcal{D}}{\longrightarrow} A^{a-1}.
	\]
	\begin{itemize}
		\item[(a)] $C^{\bullet}$ is exact and therefore a free resolution of $N$ if and only if $\underline{f}$ is a regular sequence. 
		\item[(b)] $C^{\bullet} \cong \Hom_A(C^{\bullet},A)$.  If $C^{\bullet}$ is exact, then $\Ext^i_A(N,A) = 0$ for $i \not= 4$ 
		and $\Ext^4_A(N,A) \cong N.$
	\end{itemize}
\end{theorem}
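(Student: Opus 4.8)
The plan is to regard $C^{\bullet}$ as the augmented complex
\[
0 \to A^{a-1} \stackrel{\mathcal{C}}{\longrightarrow} A^{2a} \stackrel{\mathcal{A}}{\longrightarrow} A^{2(a+1)} \stackrel{\mathcal{B}}{\longrightarrow} A^{2a} \stackrel{\mathcal{D}}{\longrightarrow} A^{a-1} \to N \to 0
\]
and to test it with the Buchsbaum--Eisenbud acyclicity criterion. That the displayed sequence is a complex is exactly Lemmas \ref{lem-1} and \ref{lem-2}, which give $\mathcal{C}\mathcal{A} = \mathcal{A}\mathcal{B} = \mathcal{B}\mathcal{D} = 0$. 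Since the ranks $a-1, 2a, 2(a+1), 2a, a-1$ of the free modules are palindromic, the expected ranks of the four maps are $a-1, a+1, a+1, a-1$ (for $\mathcal{D}, \mathcal{B}, \mathcal{A}, \mathcal{C}$ respectively), and the criterion reduces exactness to the rank conditions together with the four grade inequalities $\grade I_{a-1}(\mathcal{D}) \geq 1$, $\grade I_{a+1}(\mathcal{B}) \geq 2$, $\grade I_{a+1}(\mathcal{A}) \geq 3$ and $\grade I_{a-1}(\mathcal{C}) \geq 4$. Throughout I write $\mathfrak{m} = (f_1,f_2,f_3,f_4)$.

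For the ranks I would use the self-duality of Remark \ref{rem-1}: the substitution $\sigma \colon (f_1,f_2,f_3,f_4) \mapsto (-f_1,-f_4,-f_3,-f_2)$ carries $\mathcal{A}$ to $\mathcal{B}^T$, so $\mathcal{A}$ and $\mathcal{B}$ have equal rank; combined with $\mathcal{A}\mathcal{B} = 0$, which forces $\operatorname{rank}\mathcal{A} + \operatorname{rank}\mathcal{B} \leq 2(a+1)$, and with $I_{a+1}(\mathcal{A}) \neq 0$ (see below), this pins $\operatorname{rank}\mathcal{A} = \operatorname{rank}\mathcal{B} = a+1$. The outer ranks follow from $I_{a-1}(\mathcal{C}) = I_{a-1}(\mathcal{D}) = \mathfrak{m}^{a-1} \neq 0$, the first equality being recorded already in the proof of Theorem \ref{res-1} and the second following from it by Remark \ref{rem-1}.

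The technical core is the grade inequalities, and the key claim is that all four Fitting ideals have radical $\mathfrak{m}$, so that each has the same grade as $\mathfrak{m}$. For $\mathcal{C}$ and $\mathcal{D}$ this is clear since $\Rad \mathfrak{m}^{a-1} = \mathfrak{m}$. For $\mathcal{A}$ I would prove $\mathfrak{m} \subseteq \Rad I_{a+1}(\mathcal{A})$ by exhibiting, for each variable, an $(a+1)\times(a+1)$ minor equal to a power of it: the rows $1,\dots,a+1$ and the columns $1,\dots,a,2(a+1)$ cut out a block-triangular submatrix whose determinant is $\pm f_1^{2a-1}$, the top-left $a\times a$ block of $\mathcal{A}_{1,1}$ contributing $f_1^{a}$ and the corner entry $f_1^{a-1}$ of $\mathcal{A}_{2,2}$ the remaining factor; combining the anti-diagonal $f_3$- (resp. $f_4$-) blocks of $\mathcal{A}_{2,1}$ with the corner entries $f_3^{a-1}$ (resp. $f_4^{a-1}$) of the first column of $\mathcal{A}_{1,2}$, and the lower-bidiagonal $f_2$-block of $\mathcal{A}_{1,1}$ with the corner $f_2^{a-1}$ of $\mathcal{A}_{2,2}$, produces in the same way the minors $\pm f_3^{2a-1}, \pm f_4^{2a-1}, \pm f_2^{2a-1}$. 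Hence $\Rad I_{a+1}(\mathcal{A}) = \mathfrak{m}$, and the corresponding statement for $\mathcal{B}$ follows by applying Remark \ref{rem-1}. Granting the claim, every grade inequality becomes $\grade\mathfrak{m} \geq 1,2,3,4$, so all four hold simultaneously if and only if $\grade\mathfrak{m} \geq 4$; as $\mathfrak{m}$ is four-generated this means $\grade\mathfrak{m} = 4$, i.e. that $\underline{f}$ is a regular sequence. This gives the ``if'' direction. Conversely, if $C^{\bullet}$ is acyclic, the Buchsbaum--Eisenbud criterion forces $\grade I_{a-1}(\mathcal{C}) = \grade\mathfrak{m}^{a-1} = \grade\mathfrak{m} \geq 4$, hence $\underline{f}$ is regular. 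I expect the bookkeeping in this claim to be the main obstacle, since the pure minors of $\mathcal{A}'$ and $\mathcal{A}''$ alone only certify grade $2$, and it is the mixed minors above that raise it to the required value.

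For part (b), the self-duality $C^{\bullet} \cong \Hom_A(C^{\bullet}, A)$ is read off from Remark \ref{rem-1}: dualizing reverses the arrows and transposes the maps, and because the module ranks are palindromic while $\sigma$ sends $\mathcal{C}, \mathcal{A}$ to $\mathcal{D}^T, \mathcal{B}^T$, the dual complex is identified with $C^{\bullet}$ after the involution $\sigma$ and a reindexing. When $C^{\bullet}$ is acyclic it is a free resolution of $N$, so $\Ext^i_A(N,A) = H^i(\Hom_A(C^{\bullet},A))$; the self-dual complex is then exact except in top cohomological degree, giving $\Ext^i_A(N,A) = 0$ for $i \neq 4$ and $\Ext^4_A(N,A) = \Coker \mathcal{C}^T \cong \Coker \mathcal{D} = N$, the last isomorphism again by Remark \ref{rem-1}.
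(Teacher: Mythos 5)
Your proposal is correct and follows essentially the same route as the paper: both verify the Buchsbaum--Eisenbud conditions by establishing $I_{a-1}(\mathcal{C}) = I_{a-1}(\mathcal{D}) = (f_1,f_2,f_3,f_4)^{a-1}$ and $\Rad I_{a+1}(\mathcal{A}) = \Rad I_{a+1}(\mathcal{B}) = (f_1,f_2,f_3,f_4)$, and both deduce part (b) from the self-duality furnished by Remark \ref{rem-1}. The only difference is one of detail: you exhibit the explicit mixed $(a+1)\times(a+1)$ minors $\pm f_i^{2a-1}$ and verify the rank conditions, steps which the paper asserts without computation.
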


\begin{proof}
	For the proof of (a) we first recall $I_{a-1}(\mathcal{C}) = I_{a-1}(\mathcal{D}) =  (f_1,f_2,f_3,f_4)^{a-1}$ as shown in the proof 
	\ref{res-1}.  With the description of $I_{a+1}(\mathcal{B})$ and the definition of $\mathcal{B}$ it follows that $\Rad I_{a+1}(\mathcal{B}) = 
	(f_1,f_2,f_3,f_4)$. The same holds for $\Rad I_{a+1}(\mathcal{A})$. Then the Buchsbaum-Eisenbud acyclicity  criterion 
	shows that $C^{\bullet}$ is exact if and only if $\underline{f}$ forms a regular sequence. 
	
	For the proof of (b) recall that  $\mathcal{C}^T \cong \mathcal{D}$ and $\mathcal{A}^T \cong \mathcal{B}$ as seen easily (see \ref{rem-1}).
\end{proof}

With the previous result in mind we can describe the first co-homology of $C_2^\bullet$ and $D_2^{\bullet}$ in more detail.

\begin{corollary} \label{cor-1}
	Suppose that $\underline{f}$ is a regular sequence. Then $H_0(C_2^\bullet) = H_0(D_2^{\bullet}) = 0$ and 
	\[
	\Ext^1_A(M,A) \cong H_1(C_2^{\bullet}) \cong \im \mathcal{B}'' \text{ and } 	
	\Ext_A^1(M',A)\cong H_1(D_2^{\bullet}) \cong \im \mathcal{B}'.
	\]
\end{corollary}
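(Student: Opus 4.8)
The plan is to assemble Corollary \ref{cor-1} from the complexes and isomorphisms already established in Theorems \ref{res-1}, \ref{res-2} and \ref{res-3}. Under the regularity hypothesis on $\underline{f}$, Theorem \ref{res-3}(a) tells us that $C^\bullet$ is a free resolution of $N = \Coker \mathcal{D}$; in particular the piece $A^{2(a+1)} \stackrel{\mathcal{B}}{\to} A^{2a} \stackrel{\mathcal{D}}{\to} A^{a-1} \to 0$ is exact, so $\im \mathcal{B} = \Ker \mathcal{D}$. The first thing I would do is unwind the block decompositions $\mathcal{A} = (\mathcal{A}' \mid \mathcal{A}'')$ and $\mathcal{B} = \binom{\mathcal{B}'}{\mathcal{B}''}^T$ from Construction \ref{cons-1}, so that the single map $\mathcal{B}$ is recognized as the sum of the two maps $\mathcal{B}'$ and $\mathcal{B}''$ appearing in $C_2^\bullet$ and $D_2^\bullet$. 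This is what lets the cohomology of the big complex feed back into the small ones.

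Next I would compute $H_0$. By definition $H_0(C_2^\bullet) = \Coker \mathcal{D} = N$ and likewise for $D_2^\bullet$; however, the indexing convention in \ref{res-1}(b)--(c) places $N$ as $H^2$, so the claim $H_0 = 0$ must refer to cohomology in the homological grading where position $0$ is the rightmost nonzero spot that still has an outgoing differential. Concretely, since $\underline{f}$ is a regular sequence, $\grade(f_1,\dots,f_4;A) \geq 2$ and $f_1 f_4 - f_2 f_3$ is a nonzerodivisor (this is immediate: a regular sequence of length four certainly has grade at least two, and $f_1 f_4 - f_2 f_3$ is a nonzerodivisor on a regular sequence), so Theorem \ref{res-1}(a) applies and $C_1^\bullet$ is exact. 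Dualizing via \ref{res-1}(c), the surjectivity of $\mathcal{D}$ onto its image forces the relevant $H_0$ term to vanish, and the same argument with \ref{res-2} handles $D_2^\bullet$.

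For the main isomorphisms I would proceed as follows. By \ref{res-1}(c), $H_1(C_2^\bullet) \cong \Ext^1_A(M,A)$ and by \ref{res-2}(c), $H_1(D_2^\bullet) \cong \Ext^1_A(M',A)$, which gives the left-hand identifications for free once exactness of $C_1^\bullet$ and $D_1^\bullet$ is in hand. The substance is the identification of these $H_1$ terms with $\im \mathcal{B}''$ and $\im \mathcal{B}'$ respectively. Here I would use the exactness of the big complex $C^\bullet$: at the spot $A^{2a} \stackrel{\mathcal{B}}{\to} A^{2a}$, writing $\mathcal{B}$ through its blocks $\mathcal{B}', \mathcal{B}''$ and comparing with $\mathcal{A}', \mathcal{A}''$, one sees that $\Ker \mathcal{D} = \im \mathcal{B} = \im \mathcal{B}' + \im \mathcal{B}''$, while $H_1(C_2^\bullet) = \Ker \mathcal{D} / \im \mathcal{B}'$ and $H_1(D_2^\bullet) = \Ker \mathcal{D}/\im \mathcal{B}''$. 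The key point will be that this sum is in fact direct modulo the relevant submodule, so that $\Ker\mathcal{D}/\im\mathcal{B}' \cong \im \mathcal{B}''$ and symmetrically.

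The main obstacle I anticipate is precisely this last bookkeeping step: verifying that $\im \mathcal{B}' \cap \im \mathcal{B}'' = 0$ (equivalently, that the differential $\mathcal{B}$ of the exact complex decomposes the kernel of $\mathcal{D}$ as a genuine internal direct sum of the two block-images). This does not follow formally from the separate exactness statements in \ref{res-1} and \ref{res-2}; it requires exactness of the \emph{full} complex $C^\bullet$ from \ref{res-3}, together with an explicit check on the block structure of $\mathcal{A}$ and $\mathcal{B}$ showing that a relation lying simultaneously in both block-images would force, via the injectivity of $\mathcal{A}$ on the relevant summand, a nontrivial element of $\Ker \mathcal{A}/\im \mathcal{C}$ and hence contradict the vanishing of higher homology. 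Once the directness is secured, the stated isomorphisms drop out immediately, and the two symmetric cases are related by the substitution of Remark \ref{rem-1}.
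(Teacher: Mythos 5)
Your overall skeleton coincides with the paper's own proof: invoke Theorem \ref{res-3} to get $\ker \mathcal{D} = \im \mathcal{B}$, and decompose $\im \mathcal{B} = \im \mathcal{B}' + \im \mathcal{B}''$ via the block structure. The genuine gap is in the step you yourself single out as the crux: the claim $\im \mathcal{B}' \cap \im \mathcal{B}'' = 0$ is \emph{false}, and the contradiction you sketch does not exist. If $u\mathcal{B}' = w\mathcal{B}''$, then $(w,-u) \in \ker \mathcal{B} = \im \mathcal{A}$, so $(w,-u) = z\mathcal{A}$ for some $z$; this is perfectly consistent and produces the common value $z\mathcal{A}'\mathcal{B}''$, which need not vanish. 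Indeed only the total product $\mathcal{A}\mathcal{B} = \mathcal{A}'\mathcal{B}'' + \mathcal{A}''\mathcal{B}'$ is zero; Lemma \ref{lem-1} asserts $\mathcal{A}'\mathcal{B}' = 0$ for the matched blocks, but the crossed products $\mathcal{A}'\mathcal{B}'' = -\mathcal{A}''\mathcal{B}'$ are nonzero. Concretely, the first row of the identity $\mathcal{A}\mathcal{B} = 0$ reads $f_1 s_1 - f_2 s_2 + f_3^{a-1} r_1 = 0$, where $s_1, s_2$ are the first two rows of $\mathcal{B}''$ and $r_1$ is the first row of $\mathcal{B}'$; hence $f_1 s_1 - f_2 s_2 = -f_3^{a-1} r_1$ is an element of $\im \mathcal{B}' \cap \im \mathcal{B}''$ whose first coordinate is $f_1 f_3^{a-1} \neq 0$, for every $a \geq 2$ and every regular sequence. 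So membership in $\im \mathcal{A}$ (not in $\ker \mathcal{A}$) is all one gets, and no element of $\ker \mathcal{A}/\im \mathcal{C}$ ever appears.

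Worse, the literal reading you adopted (with $\im \mathcal{B}''$ a submodule of $A^{2a}$) makes the conclusion itself unattainable, so no repair of your argument is possible: once $H_0(D_2^{\bullet}) = \ker \mathcal{B}'' = 0$ is granted, $\im \mathcal{B}'' \cong A^{a+1}$ is free, while the relation above gives $f_1 \bar{s}_1 = f_2 \bar{s}_2$ among the $a+1$ classes generating $H_1(C_2^{\bullet}) = \ker\mathcal{D}/\im\mathcal{B}'$; a module generated by $a+1$ elements that were isomorphic to $A^{a+1}$ would force those generators to be a basis (a surjective endomorphism of a finitely generated module is injective), a contradiction. The statement, and the paper's two-line proof, must therefore be read with $\im \mathcal{B}''$ meaning the image of the composite $A^{a+1} \stackrel{\mathcal{B}''}{\longrightarrow} A^{2a} \longrightarrow A^{2a}/\im \mathcal{B}' = \Coker \mathcal{B}'$, i.e. $H_1(C_2^{\bullet})$ is generated by the residue classes of the rows of $\mathcal{B}''$; under that reading, $\ker \mathcal{D} = \im \mathcal{B}$ together with $\im \mathcal{B} = \im \mathcal{B}' + \im \mathcal{B}''$ already concludes the argument and no directness is needed. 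A smaller point: your identification of $H_0$ is also off --- $H_0(C_2^{\bullet}) = \ker \mathcal{B}'$ at the left end of the complex, which is $\Hom_A(M,A)$, and it vanishes because $M$ is annihilated by $I_{a+1}(\mathcal{A}')$, an ideal containing nonzerodivisors --- not because of any ``surjectivity of $\mathcal{D}$''.
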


\begin{proof}
	First the vanishing is clear. For the proof of the rest observe that $\ker \mathcal{D} = \im \mathcal{B}$ as follows by 
	Theorem \ref{res-3}. Moreover, $\im \mathcal{B} = \im \mathcal{B}' + \im \mathcal{B}''$, which concludes the argument. 
\end{proof}

Before we shall give a particular application to monomial curves let us consider a general feature. 
Let $A$ denote a commutative Noetherian ring and let $A \to B$ be a ring homomorphism with $B$ 
a commutative Noetherian  ring $B$. For a given $A$-regular sequence $\underline{f} = f_1,f_2,f_3,f_4$ and an integer $a \geq 2$ 
we form the matrix $\mathcal{D}$ as above. 

\begin{remark} \label{rem}
	By view of \ref{res-3} we get the resolution 
	\[
	C^{\bullet}: 0 \to A^{a-1} \stackrel{\mathcal{C}}{\longrightarrow} A^{2a}   \stackrel{\mathcal{A}}{\longrightarrow} A^{2(a+1)} 
	\stackrel{\mathcal{B}}{\longrightarrow} A^{2a}  \stackrel{\mathcal{D}}{\longrightarrow} A^{a-1}.
	\]
	Now suppose that $\underline{f}$ is also a $B$-regular sequence. Tensoring by $B$ yields an exact sequence 
	\[
		C^{\bullet}\otimes_A B: 0 \to B^{a-1} \stackrel{\mathcal{C}}{\longrightarrow} B^{2a}   \stackrel{\mathcal{A}}{\longrightarrow} B^{2(a+1)} 
	\stackrel{\mathcal{B}}{\longrightarrow} B^{2a}  \stackrel{\mathcal{D}}{\longrightarrow} B^{a-1}.
	\]
	That is, $\Tor_i^A(\Coker \mathcal{D},B) = 0$ for $i > 0$.
\end{remark}

A concrete example is the following.

\begin{example} \label{ex-2}
	 Let $A$ be  the polynomial ring over the field $\Bbbk$ in the variables $x_1,x_2,x_3,y_1,y_2,y_3$. 
	 We consider the ideal $I \subset A$ generated by the $2 \times 2$-minors of the matrix 
	 $\begin{pmatrix}
	 	x_1 &x_2 & x_3\\y_1&y_2 & y_3. 
	 \end{pmatrix}$
	 We define $B = A/I$. Then $x_1,x_2-y_1,x_3-y_2,y_3$ is an $A$-regular as well as  a $B$-regular sequence. 
\end{example}


\section{An application}
Let 
${C} \subset \mathbb{P}^3_{\Bbbk}$ denote an integral non-degenerate curve in 
projective $3$-space $\mathbb{P}^3_{\Bbbk}$  over a field $\Bbbk$ with $d = \deg {C}$. 
Let $A = \Bbbk[x_0,x_1,x_2,x_3]$ the polynomial ring in $4$-variables and let $I_C \subset A$ 
denote the defining ideal of ${C}$. We are interested in the following monomial curves:

\begin{example} \label{ex-1}
	Let $d \geq 3$ and let ${C} \subset \mathbb{P}^3_\Bbbk$ be the curve given as the image of 
	\[
	\mathbb{P}^1_\Bbbk \to \mathbb{P}^3_\Bbbk,   \; (s:t) \mapsto (s^d:s^at^b:s^bt^a:t^d) 
	\text{ with } 0 < b < a, d = a+b, \gcd(a,b) = 1.
	\]
	Its defining 
	ideal $I_C \subset A$ is given by $I_C = (Q, F_i, 0 \leq i \leq a-b)$ 
	with 
	\[
	Q = x_0x_3-x_1x_2, \text{ and } F_i = x_0^{a-b-i}x_2^{b+i} -x_1^{a-i}x_3^i,\;  i = 0,\ldots,a-b
	\]
	(see e.g. \cite{BR} or \cite{BSV}). Note that   $Q = x_0x_3-x_1x_2$ defines a smooth quadric. Moreover, it follows $p_a({C}) = (a-1)(b-1)$ for the arithmetic genus. In the following we assume that $A/I_C$ is not a Cohen-Macaulay ring. 
	This holds if and only if $a-b > 2$, supposed in the sequel. 
\end{example}

Next we summarize a few - more or less - well known properties of these monomial curves. For the definition and basic facts about 
the Castelnuovo-Mumford regularity we refer to \cite{Ed}. 

\begin{observation} \label{obs-1}
	(A) We have $\reg A/I_C = a$ (see e.g. \cite{LP} or \cite{Sp1}). By the short exact sequence 
	\[
	0 \to I_C/(Q) \to A/(Q) \to A/I_C \to 0
	\]
	it follows that $M: = I_C/(Q) $ is a graded $A$-module with $\dim M = 3, \depth M = 2$ and $\reg M = a$. To this end 
	note that $A/(Q)$ is a $3$-dimensional Cohen-Macaulay ring with $\reg A/(Q) = 2$.\\
	(B) By view of \ref{ex-1} it follows $a(M) = a$ for the initial degree of $M = I_C/(Q)$. Because of $\reg M = a$ we have a 
	linear minimal free resolution of $M$ (see e.g. \cite{LP} or  \cite{SP2}). By the previous results there is an explicit description 
	of the minimal free resolution. 
\end{observation}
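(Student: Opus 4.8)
The plan is to extract all three invariants of $M=I_C/(Q)$ from the short exact sequence
\[
0 \to M \to A/(Q) \to A/I_C \to 0
\]
together with the geometry of the smooth quadric. First I would record the needed facts about $A/(Q)$: since $Q=x_0x_3-x_1x_2$ is irreducible, $A/(Q)$ is a domain and $Q$ a nonzerodivisor, so $A/(Q)$ is a complete intersection, hence Cohen--Macaulay of dimension $\dim A-1=3$, with the two-term resolution $0\to A(-2)\to A\to A/(Q)\to 0$ controlling its local cohomology; in particular $H^i_{A_+}(A/(Q))=0$ for $i\neq 3$. The cited equality $\reg A/I_C=a$ enters as input, but my route to $\reg M$ will not rely on it directly.

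For the dimension and depth of $M$ I would feed the local cohomology long exact sequence of the displayed sequence. Under the standing hypotheses $A/I_C$ is a two-dimensional domain that is not Cohen--Macaulay, so $H^0_{A_+}(A/I_C)=0$ while $H^1_{A_+}(A/I_C)=HR(C)\neq 0$, whence $\depth A/I_C=1$. Comparing with the vanishing of $H^{\le 2}_{A_+}(A/(Q))$, the long exact sequence forces $H^0_{A_+}(M)=H^1_{A_+}(M)=0$, an isomorphism $H^2_{A_+}(M)\cong HR(C)$, and a short exact sequence $0\to H^2_{A_+}(A/I_C)\to H^3_{A_+}(M)\to H^3_{A_+}(A/(Q))\to 0$. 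Since $H^2_{A_+}(M)\cong HR(C)\neq 0$ and $H^3_{A_+}(M)\neq 0$, this gives at once $\depth M=2$ and $\dim M=3$.

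The regularity is the delicate point: the crude bound read off the short exact sequence only yields $\reg M\le a+1$, so instead I would pin down the end degrees of the local cohomology from the geometry. Under the Segre isomorphism $X:=\operatorname{Proj}A/(Q)\cong\mathbb{P}^1\times\mathbb{P}^1$ the curve $C$ is a Cartier divisor whose class is determined by $\deg C=a+b$ and $p_a(C)=(a-1)(b-1)$ through adjunction, namely of type $(a,b)$; hence the sheafification of $M$ is $\mathcal{I}_{C/X}\cong\mathcal{O}_X(-a,-b)$. Because $\depth M=2$ the module $M$ is saturated, so $H^i_{A_+}(M)_j\cong H^{i-1}\big(X,\mathcal{O}_X(j-a,j-b)\big)$ for $i\ge 2$. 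A K\"unneth computation on $\mathbb{P}^1\times\mathbb{P}^1$ then shows $H^2_{A_+}(M)_j\neq 0$ exactly for $b\le j\le a-2$ and $H^3_{A_+}(M)_j\neq 0$ exactly for $j\le b-2$, so the top degrees contribute $\max\big((a-2)+2,\,(b-2)+3\big)=\max(a,b+1)=a$. Thus $\reg M=a$. I expect this K\"unneth/end-degree computation to be the main obstacle, as it is precisely the step that produces the sharp value $a$ rather than $a+1$.

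Finally, for part (b) I would note that $M=I_C/(Q)$ is generated by the images of $F_0,\dots,F_{a-b}$, all of degree $a$, while $M_j=0$ for $j<a$ since below degree $a$ the ideal $I_C$ consists only of multiples of $Q$; hence the initial degree is $a(M)=a$. As $M$ is generated in the single degree $a$ with $\reg M=a=a(M)$, it has a linear minimal free resolution (see \cite{Ed}). The explicit shape of that resolution is then obtained by specializing the complexes of Section~4 (Theorem~\ref{res-1} and Theorem~\ref{res-3}) to the appropriate entries in $x_0,\dots,x_3$, thereby identifying the differentials with the matrices $\mathcal{A},\mathcal{B},\mathcal{C},\mathcal{D}$.
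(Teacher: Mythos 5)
Your proposal is correct, but it is considerably more self-contained than what the paper actually does: the paper offers no real proof of Observation \ref{obs-1}, instead citing \cite{LP} and \cite{Sp1} for $\reg A/I_C = a$, reading $\dim M = 3$, $\depth M = 2$, $\reg M = a$ off the short exact sequence against the Cohen--Macaulay ring $A/(Q)$, and citing \cite{LP}, \cite{SP2} for the passage from $a(M) = \reg M$ to linearity. You replace the citations by explicit computations: the local cohomology long exact sequence (giving $H^0_{A_+}(M) = H^1_{A_+}(M) = 0$, $H^2_{A_+}(M) \cong HR(C) \neq 0$, and $H^3_{A_+}(M) \neq 0$, hence $\depth M = 2$ and $\dim M = 3$) and the Segre/K\"unneth identification $\widetilde{M} \cong \mathcal{O}_{\mathbb{P}^1 \times \mathbb{P}^1}(-a,-b)$, which pins down the end degrees $a-2$ and $b-2$ of $H^2_{A_+}(M)$ and $H^3_{A_+}(M)$ and hence the sharp value $\reg M = \max(a, b+1) = a$. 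You were right to flag the regularity as the delicate step and to avoid relying on the cited equality $\reg A/I_C = a$: with the strict module convention ($\reg = \max_i \{\operatorname{end} H^i_{A_+} + i\}$) your own computation shows $\operatorname{end} H^1_{A_+}(A/I_C) = a-2$ and $\operatorname{end} H^2_{A_+}(A/I_C) = b-2$, so $\reg A/I_C = a - 1$; the paper's statements $\reg A/I_C = a$ and $\reg A/(Q) = 2$ are best read in the ideal-sheaf convention (indeed $\reg I_C = a$ and $\reg (Q) = 2$), and the crude bound $\reg M \leq \max(\reg A/(Q), \reg A/I_C + 1)$ would only give $a+1$ in any case. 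As a byproduct, your K\"unneth computation $\dim_\Bbbk H^2_{A_+}(M)_j = (j-b+1)(a-j-1)$ for $b \leq j \leq a-2$ independently confirms Corollary \ref{cor-3}\,(d), which the paper obtains only later from the explicit resolution. Two harmless inaccuracies: the comparison $H^i_{A_+}(M)_j \cong H^{i-1}(X, \widetilde{M}(j))$ for $i \geq 2$ holds for any finitely generated graded module, so the appeal to saturatedness is unnecessary (saturation only matters for recovering $M$ from $\Gamma_*(\widetilde{M})$); and the divisor type could a priori be $(b,a)$ rather than $(a,b)$, but the computation is symmetric under the swap.
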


\begin{theorem} \label{thm-3}
	With the previous notation we substitute $a$ by $a-b+1$ and $(f_1,f_2,f_3,f_4)$ by the variables $(x_3,x_1,x_0,x_2)$ in the 
	constructions \ref{cons-1} and \ref{cons-2}. Then 
	\begin{itemize}
		\item[(a)] $M \cong A^{a-b+1}(-a) /\im \mathcal{A}'$ and 
		\item[(b)] $ 0 \to A^{a-b-1} (-a-2)\stackrel{\mathcal{C}}{\longrightarrow} A^{2(a-b)}(-a-1) 
		\stackrel{\mathcal{A}'}{\longrightarrow} A^{a-b+1}(-a)$ is a graded minimal free resolution of $I_C/(Q)$.
	\end{itemize}
\end{theorem}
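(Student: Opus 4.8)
The plan is to deduce both assertions from the exactness criterion of Theorem \ref{res-1} together with an explicit identification of $\Coker \mathcal{A}'$ with $M = I_C/(Q)$. First I would record that under the substitution $(f_1,f_2,f_3,f_4) \mapsto (x_3,x_1,x_0,x_2)$ the sequence $\underline f$ becomes the four variables of $A$, hence an $A$-regular sequence, and that $f_1f_4 - f_2f_3 = x_2x_3 - x_0x_1$ is a nonzero polynomial and therefore a nonzerodivisor in the domain $A$. By Theorem \ref{res-1}(a) the complex $C^\bullet_1$ is then exact, so under the substitution of the statement it becomes a free resolution of $\Coker \mathcal{A}'$ of the shape displayed in (b). Since every nonzero entry of $\mathcal{A}'$ (coming from the blocks $\mathcal{A}_{1,1}$ and $\mathcal{A}_{2,1}$ of Construction \ref{cons-1}) and of $\mathcal{C}$ (Construction \ref{cons-2}) is one of the variables, all differentials have entries in $\mathfrak m$, so the resolution is minimal; inserting the degree shifts forced by these linear entries produces exactly the twists $(-a),(-a-1),(-a-2)$ in (b). This settles the exactness, linearity and minimality, and reduces everything to identifying the cokernel.

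Next I would construct the comparison map. Define $\psi\colon A^{a-b+1}(-a) \to M = I_C/(Q)$ by sending the $k$-th basis vector to the class $\bar F_{k}$ of the defining equation $F_k = x_0^{a-b-k}x_2^{b+k} - x_1^{a-k}x_3^{k}$ of Example \ref{ex-1}; since $I_C = (Q, F_0, \dots, F_{a-b})$, the classes $\bar F_k$ generate $M$, so $\psi$ is a surjection onto a module generated in degree $a$. The essential computation is that the rows of $\mathcal{A}'$ map to $0$: a direct expansion yields the two families of relations
\[
x_2 F_i - x_0 F_{i+1} = x_1^{a-i-1}x_3^{i}\,Q, \qquad x_1 F_i - x_3 F_{i-1} = -\,x_0^{a-b-i}x_2^{b+i-1}\,Q,
\]
both lying in $(Q)$, hence vanishing in $M$. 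Under $(f_1,f_2,f_3,f_4)=(x_3,x_1,x_0,x_2)$ these are precisely the syzygies encoded by the rows of $\mathcal{A}_{2,1}$ and $\mathcal{A}_{1,1}$, so $\psi$ factors through a graded surjection $\bar\psi\colon \Coker\mathcal{A}' \twoheadrightarrow M$. This gives half of (a), and combined with the first paragraph it would already give (b) once $\bar\psi$ is known to be injective.

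The main obstacle is exactly this injectivity of $\bar\psi$, which amounts to showing that the $2(a-b)$ linear syzygies above already generate the whole first syzygy module of the $\bar F_k$. I would close this by a Hilbert-series comparison. From the resolution in (b) one reads off $H_{\Coker\mathcal{A}'}(t) = (1-t)^{-4}\bigl((a-b+1)t^a - 2(a-b)t^{a+1} + (a-b-1)t^{a+2}\bigr)$, while the short exact sequence $0 \to M \to A/(Q) \to A/I_C \to 0$ of Observation \ref{obs-1} gives $H_M(t) = H_{A/(Q)}(t) - H_{A/I_C}(t)$ with $H_{A/(Q)}(t) = (1+t)(1-t)^{-3}$ and the curve's Hilbert polynomial $(a+b)n + 1 - (a-1)(b-1)$ from Example \ref{ex-1}; the regularity bounds $\reg \Coker\mathcal{A}' = \reg M = a$ reduce the comparison to finitely many degrees. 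Matching these two series forces $\ker\bar\psi = 0$ degreewise, so $\bar\psi$ is an isomorphism. Alternatively one may argue that $\Coker\mathcal{A}'$ has projective dimension $2$ over the regular ring $A$ of dimension $4$, hence depth $2$, so every nonzero submodule has dimension $\geq 2$; checking that $\bar\psi$ is an isomorphism in sufficiently high codimension then kills $\ker\bar\psi$. Either route yields (a), and together with the first paragraph establishes that the complex in (b) is the minimal graded free resolution of $I_C/(Q)$.
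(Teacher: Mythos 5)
Your proposal is correct and its constructive core coincides with the paper's proof: you exhibit the same two families of relations (your second family is the paper's $x_3F_i-x_1F_{i+1}=x_0^{a-b-i-1}x_2^{b+i}Q$ after the shift $i\mapsto i+1$), you build the same presentation $A^{a-b+1}(-a)\to M$ sending basis vectors to the classes $\bar F_k$, and you invoke Theorem \ref{res-1}(a) (Buchsbaum--Eisenbud) plus the linearity of the entries to get exactness and minimality of $C_1^{\bullet}$ after the substitution. Where you genuinely go beyond the paper is the identification $\Coker\mathcal{A}'\cong M$: the paper's proof simply asserts this ``by view of Theorem \ref{res-1},'' leaning implicitly on Observation \ref{obs-1} ($\reg M=a$ equals the initial degree, $\depth M=2$, hence a linear resolution), whereas you isolate the injectivity of the induced surjection $\bar\psi\colon\Coker\mathcal{A}'\twoheadrightarrow M$ as the real issue and propose two ways to close it. Your closure does work, and the cleanest version is the combination of your two routes: the resolution in (b) gives
\[
P_{\Coker\mathcal{A}'}(n)=(a-b+1)\tbinom{n-a+3}{3}-2(a-b)\tbinom{n-a+2}{3}+(a-b-1)\tbinom{n-a+1}{3}=(n-a+1)(n-b+1),
\]
while $P_M(n)=P_{A/(Q)}(n)-P_{A/I_C}(n)=(n+1)^2-\bigl((a+b)n+1-(a-1)(b-1)\bigr)$, and these two polynomials agree; hence $\ker\bar\psi$ has finite length, and since $\Coker\mathcal{A}'$ has projective dimension $2$, so depth $2>0$ by Auslander--Buchsbaum, it admits no nonzero finite-length submodule, forcing $\ker\bar\psi=0$. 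One caution: your route (a) as literally stated (matching the series \emph{degreewise}) needs the actual Hilbert function of $A/I_C$ in low degrees, not just the Hilbert polynomial --- the curve is not arithmetically Cohen--Macaulay, which is the whole point of the paper, so function and polynomial differ exactly where $HR(C)\neq 0$; the polynomial-level comparison combined with the depth argument avoids this extra computation. A last cosmetic remark: the theorem's phrase ``substitute $a$ by $a-b+1$'' is off by one against the displayed sizes (the constructions require $a\mapsto a-b$ so that $\mathcal{A}'$ is $2(a-b)\times(a-b+1)$); your displayed ranks follow the correct sizes, so this does not affect your argument.
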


\begin{proof}
	With the previous notation there are the following relations 
	\[
		x_2F_i -x_0F_{i+1} = x_1^{a-i-1}x_3^i Q\; \text{ and } x_3F_i -x_1F_{i+1} = x_0^{a-b-i-1}x_2^{b+i} Q \; \text{ for } i= 0,\ldots, a-b-1. 
	\]
	Then we define a free presentation $A^{a-b+1}(-a) \to M$ by 
	\[
	(0,\ldots,0,1,0,\ldots,0) \mapsto F_i +(Q)/(Q) \; \text{ for } i = 0, \ldots, a-b.
	\]
	Next we introduce the two $(a-b)\times(a-b+1)$-matrices 
	\[
		\mathcal{A}_{1,1}= 
	\begin{pmatrix}
		x_3 & -x_1&  &  &  \\
		& x_3 & -x_1&  &  \\
		&         &     \ddots  &  \ddots &\\
		&          &       &              x_3& -x_1
	\end{pmatrix} 
	\text{ and }
		\mathcal{A}_{2,1} = 
	\begin{pmatrix}
		&   &  & -x_2 &  x_0\\
		&   & -x_2&  x_0&  \\
		&       \reflectbox{$\ddots$}    &      \reflectbox{$\ddots$}   & &\\
		-x_2&    x_0     &       &      & 
	\end{pmatrix} 
	\]
	That is we obtain these matrices by substituting $(f_1,f_2,f_3,f_4)$ by the variables $(x_3,x_1,x_0,x_2)$ in the 
	constructions \ref{cons-1} and \ref{cons-2}. By view of Theorem \ref{res-1} it follows that 
	$M \cong \Coker \mathcal{A}'$ and $C_1^{\bullet}$ provides the minimal graded resolution of $M$. 
\end{proof}

For the following application we need the notion of the canonical module of a finitely  generated module 
$M$. Here we note it by $\omega(M)$. For several of the technical details we refer to the book \cite{Sp3}. 

\begin{corollary} \label{cor-2}
	With the notation of \ref{thm-3} and the constructions of  \ref{cons-1} and \ref{cons-2} we have: 
	\begin{itemize}
		\item[(a)] $\omega(M) \cong M(2)$.
		\item[(b)] $0 \to A^{a-b}(-a) \stackrel{\mathcal{C}}{\longrightarrow} A^{2(a-b+1)}(-a+1) 
		\stackrel{\mathcal{A}''}{\longrightarrow} A^{a-b+2}(-a+2)$ is a graded minimal free resolution of $\omega(M)$. 
 	\end{itemize}
\end{corollary}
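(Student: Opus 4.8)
The plan is to obtain both parts from the identification of $\omega(M)$ with a dual of $M$. Since $A$ is Gorenstein of dimension $4$ with $\omega_A = A(-4)$ and since $\dim M = 3$ by Observation~\ref{obs-1}, one has $\omega(M) = \Ext^1_A(M, A)(-4)$. I would compute $\Ext^1_A(M,A)$ by applying $\Hom_A(-,A)$ to the minimal graded resolution of $M$ given in Theorem~\ref{thm-3}. Because the variables $(x_3,x_1,x_0,x_2)$ form an $A$-regular sequence, Theorem~\ref{res-1}(b) shows that the dualized complex is, up to the twist recorded by the degrees $-a,-a-1,-a-2$, the complex $C^{\bullet}_2$; Corollary~\ref{cor-1} then gives $\Ext^1_A(M,A)\cong\im\mathcal{B}''$ (together with $\Ext^2_A(M,A)\cong N$). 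Hence $\omega(M)$ is, up to a global twist, the module $\im\mathcal{B}''$ produced by the constructions of Section~4.

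For part (a) I would match $\im\mathcal{B}''$ with a twist of $M$ by means of the graded substitution of Remark~\ref{rem-1}. That substitution $(f_1,f_2,f_3,f_4)\mapsto(-f_1,-f_4,-f_3,-f_2)$ is an automorphism of $A$ under which, by the pattern already recorded there ($\mathcal{A}_{1,1}\cong\mathcal{B}_{1,2}$, $\mathcal{A}_{2,1}\cong\mathcal{B}_{2,2}$), the block $\mathcal{A}''$ is carried to $(\mathcal{B}'')^T$. Transporting the presentation of $M=\Coker\mathcal{A}'$ through this automorphism and comparing the internal degrees of the entries of $\mathcal{C},\mathcal{A}'$ with those of $\mathcal{C},\mathcal{A}''$ should leave a single overall shift, which I expect to come out as $\omega(M)\cong M(2)$.

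For part (b) I would resolve $\omega(M)$ directly through Theorem~\ref{res-2}, applied with the parameter taken to be $a-b+1$ and with the same substitution $(f_1,f_2,f_3,f_4)=(x_3,x_1,x_0,x_2)$. Since $f_1f_3=x_0x_3$ is a non-zero divisor, Theorem~\ref{res-2}(a) guarantees that $D^{\bullet}_1$ is exact and hence a free resolution, and its ranks $a-b$, $2(a-b+1)$, $a-b+2$ are exactly those asserted in (b). The linear block $\mathcal{C}$ fixes the first two twists as $A^{a-b}(-a)\to A^{2(a-b+1)}(-a+1)$, and the remaining twists are then forced by the (non-linear) power entries of $\mathcal{A}''$; minimality is automatic because every entry of $\mathcal{C}$ and of $\mathcal{A}''$ lies in the homogeneous maximal ideal.

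The hard part will be the degree bookkeeping that makes (a) and (b) consistent. The two constructions are normalised differently --- parameter $a-b$ for the linear resolution of $M$ in Theorem~\ref{thm-3}, parameter $a-b+1$ for the resolution in (b) --- and the columns of $\mathcal{A}''$ carry non-uniform internal twists, so the uniform shift written in (b) is only a shorthand. Verifying that the symmetry of Remark~\ref{rem-1} transports the presentation of $M$ onto that of $\im\mathcal{B}''$ with precisely the shift predicted by $\omega(M)\cong M(2)$, and that this agrees with the resolution extracted from $D^{\bullet}_1$, is where the genuine work lies; the exactness and annihilation statements needed as input are already furnished by Theorems~\ref{res-2} and~\ref{res-3} and Corollary~\ref{cor-1}.
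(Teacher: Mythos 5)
Your plan correctly reduces everything to the identification $\Ext^1_A(M,A)\cong\im\mathcal{B}''$ coming from Corollary \ref{cor-1} --- that is also the paper's main tool --- but it stops exactly where the proof has to begin, and the steps you defer, if actually carried out, do not give the claimed answer. The twist in (a) is not something one can ``expect'': it is forced by the grading. With the parameter $a-b$ of Theorem \ref{thm-3}, every nonzero entry of $\mathcal{B}''$ has degree $a-b-1$, and its rows land in the middle term $A^{2(a-b)}(a+1)$ of the dualized resolution; hence $\Ext^1_A(M,A)$ is generated in degree $-b-2$, so $\omega(M)=\Ext^1_A(M,A)(-4)$ is generated in degree $2-b$, whereas $M(2)$ is generated in degree $a-2$. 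Since $a+b>4$ under the standing hypotheses, these degrees never agree; the symmetry of Remark \ref{rem-1} can at most produce $\omega(M)\cong M(a+b-2)$ --- the twist that matches the duality in Corollary \ref{cor-3}(c) --- and never $M(2)$. The paper pins the twist down by an input for which your proposal has no substitute: the exact sequence of canonical modules $0\to\omega(A/(Q))\to\omega(M)\to\omega(A/I_C)\to 0$ induced by $0\to M\to A/(Q)\to A/I_C\to 0$, together with an initial-degree computation quoted from \cite{Sp1}. Some argument of this kind is indispensable; your proposal contains none.

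Part (b) has a structural gap. Theorem \ref{res-2} resolves $M'=\Coker\mathcal{A}''$, so invoking it presupposes $\omega(M)\cong\Coker\mathcal{A}''$ up to twist; this is never proved, and it does not follow from Corollary \ref{cor-1}, which identifies $\Ext^1_A(M,A)$ with the image of the rows of $\mathcal{B}''$ inside the homology of $C_2^{\bullet}$, not with a cokernel of $\mathcal{A}''$ (indeed $\Coker\mathcal{A}''$ needs $a-b+2$ minimal generators, one more than $\Ext^1_A(M,A)$ can have). Moreover your routes for (a) and (b) are mutually exclusive: by uniqueness of minimal graded free resolutions, (a) forces $\omega(M)$ to have Betti numbers $a-b+1$, $2(a-b)$, $a-b-1$ (those of $M$), whereas (b) asserts $a-b+2$, $2(a-b+1)$, $a-b$, so an argument establishing both would establish a contradiction. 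Finally, your suggestion that the uniform twists in (b) are ``only a shorthand'' for non-uniform ones is not a repair: with parameter $a-b+1$ all nonzero entries of $\mathcal{A}''$ have the same degree $a-b\geq 3$, so $A^{2(a-b+1)}(-a+1)\stackrel{\mathcal{A}''}{\longrightarrow}A^{a-b+2}(-a+2)$ is simply not a graded map. These inconsistencies trace back to the statement itself and to the paper's own very terse proof, but a correct treatment has to detect and resolve them --- for instance by proving $\omega(M)\cong M(a+b-2)$ and twisting the resolution of Theorem \ref{thm-3} accordingly --- rather than route around them.
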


\begin{proof}
	The short exact sequence $0 \to I_C/(Q) \to A/(Q) \to A/I_C \to 0$ induces a short exact sequence 
	of canonical modules 
	\[
	0 \to \omega(A//Q) \to \omega(M) \to \omega(A/I_C) \to 0.
	\]
	By view of the results in \cite{Sp1} it follows that the initial degree of $\omega(M)$ is equal to $a-2$. 
	By view of \ref{cor-1} it follows that $\omega(M)(-4) \cong \im \mathcal{A}'$. Then both of the statements 
	follow. 
\end{proof}

Next we shall investigate the Hartshorne-Rao module for the monomial curves as above. Note that 
it is defined by $HR(C) = H^1_{A_+}(A/I_C)$ (see \cite{Hr2} and \cite{Rp}). 

\begin{corollary} \label{cor-3}
	With the previous notation the following results hold:
	\begin{itemize}
		\item[(a)] $HR(C) (b)\cong A^{a-b-1}/\im \mathcal{D}$.
		\item[(b)] 
	   The minimal free resolution of $HR(C)$ is given by
		\begin{gather*}
		0 \to A^{a-b-1}(-a-2) \stackrel{\mathcal{C}}{\longrightarrow} A^{2(a-b)}(-a-1)  \stackrel{\mathcal{A}}{\longrightarrow} A^{a-b+1}(-a) \oplus A^{a-b+1}(-b-2) \\ \stackrel{\mathcal{B}}{\longrightarrow} 
		A^{2(a-b)}(-b-1)  \stackrel{\mathcal{D}}{\longrightarrow} A^{a-b-1}(-b).
		\end{gather*}
		\item[(c)] $\Hom_\Bbbk(HR(C), \Bbbk) \cong HR(C)(a+b-2)$.
		\item[(d)] $\dim_\Bbbk HR(C)_j = (j-b+1)(a-j-1)$ if $ b \leq j \leq a-2$ and zero else.
		\item[(e)] $\ell_A(HR(C)) = \tbinom{a-b+1}{3}$.
	\end{itemize}
\end{corollary}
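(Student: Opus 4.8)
The plan is to reduce everything to the module $M = I_C/(Q)$ analyzed in Theorem~\ref{thm-3} and Corollary~\ref{cor-2}, and then to transport the self-dual complex $C^{\bullet}$ of Theorem~\ref{res-3} onto $HR(C)$ by graded local duality. Write $\mathfrak m = A_+$. The first step is to apply $H^\bullet_{\mathfrak m}(-)$ to the short exact sequence $0 \to M \to A/(Q) \to A/I_C \to 0$ of Observation~\ref{obs-1}. Since $A/(Q)$ is Cohen--Macaulay of dimension three, $H^i_{\mathfrak m}(A/(Q)) = 0$ for $i \le 2$, and since $A/I_C$ is a domain, $H^0_{\mathfrak m}(A/I_C) = 0$; the long exact sequence therefore collapses to a graded isomorphism $HR(C) = H^1_{\mathfrak m}(A/I_C) \cong H^2_{\mathfrak m}(M)$.

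For (a) I would chain three identifications. Graded local duality over the four-variable ring $A$ (with canonical module $A(-4)$) gives $H^2_{\mathfrak m}(M)^\vee \cong \Ext^2_A(M,A)(-4)$; Theorem~\ref{res-1}(c) identifies $\Ext^2_A(M,A) \cong \Coker \mathcal{D} =: N$ (its hypotheses hold trivially, as the substituted sequence $(x_3,x_1,x_0,x_2)$ is a regular sequence and $x_2x_3 - x_0x_1$ is a nonzerodivisor); and, crucially, because $(x_3,x_1,x_0,x_2)$ is regular, Theorem~\ref{res-3}(b) applies to $N$, so that $N$ has projective dimension four with $\Ext^i_A(N,A)=0$ for $i\neq 4$ and $\Ext^4_A(N,A)\cong N$. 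The vanishing forces $N$ to be Cohen--Macaulay of dimension zero, hence of finite length, whence $HR(C)\cong H^2_{\mathfrak m}(M)$ is finite length; and a second application of local duality turns $\Ext^4_A(N,A)\cong N$ into $N^\vee \cong N$ up to a twist. Substituting this into $HR(C)\cong N^\vee(4)$ strips off the Matlis dual and yields $HR(C) \cong \Coker\mathcal{D}$ up to a degree twist. The twist is then fixed to $-b$ by carefully tracking the grading shifts through the resolution of $M$ (whose dual places $N=\Ext^2_A(M,A)$ in generator degree $-(a+2)$) and through the self-duality of $N$; this gives the precise statement $HR(C)(b)\cong A^{a-b-1}/\im\mathcal{D}$.

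Part (b) is then essentially immediate: by (a) a free resolution of $HR(C)(b)$ is the complex $C^{\bullet}$ of Theorem~\ref{res-3}, specialized so that its ranks become $a-b-1,\,2(a-b),\,2(a-b+1),\,2(a-b),\,a-b-1$ (parameter $a-b$) with $(f_1,f_2,f_3,f_4)=(x_3,x_1,x_0,x_2)$, which is exact precisely because these variables form a regular sequence; installing the degree shifts read off from the degrees of the entries of $\mathcal{C},\mathcal{A},\mathcal{B},\mathcal{D}$ produces the displayed resolution, and minimality holds since no entry is a unit. For (c) I would exploit the palindromic symmetry of this resolution $F_\bullet$: one checks directly that $\Hom_A(F_\bullet,A)$ is the reversed complex $F_\bullet(a+b+2)$ (the ranks are symmetric and the twists pair up). Since $HR(C)$ is finite length, local duality gives that $\Hom_A(F_\bullet,A)$ resolves $\Ext^4_A(HR(C),A)\cong \Hom_\Bbbk(HR(C),\Bbbk)(4)$; comparing with the twisted resolution $F_\bullet(a+b+2)$ and invoking uniqueness of the minimal free resolution yields $\Hom_\Bbbk(HR(C),\Bbbk)(4)\cong HR(C)(a+b+2)$, i.e.\ $\Hom_\Bbbk(HR(C),\Bbbk)\cong HR(C)(a+b-2)$.

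Finally (d) and (e) are bookkeeping from the graded resolution in (b). Using $\dim_\Bbbk A_k = \binom{k+3}{3}$ and the alternating sum $\dim_\Bbbk HR(C)_j = \sum_i (-1)^i \dim_\Bbbk (F_i)_j$ over the five free modules $F_i$, the binomial coefficients collapse to $(j-b+1)(a-j-1)$ on the range $b \le j \le a-2$ and vanish outside it, giving (d). Summing over $j$ and setting $m = a-b$ turns $\sum_{l=1}^{m-1} l(m-l)$ into $\tfrac{(m-1)m(m+1)}{6} = \binom{a-b+1}{3}$, giving (e). I expect the main obstacle to be the grading bookkeeping in (a): removing the Matlis dual from $H^2_{\mathfrak m}(M)$ genuinely requires the self-duality $\Ext^4_A(N,A)\cong N$ of Theorem~\ref{res-3}(b), and pinning the resulting isomorphism to the exact twist by $b$ (rather than merely up to an unknown shift) is the one point demanding careful tracking of every degree shift through the two applications of local duality.
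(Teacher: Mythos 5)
Your proposal is correct and takes essentially the same route as the paper: part (a) via graded local duality combined with Theorem \ref{res-1}(c) (identifying $\Ext^2_A(M,A)$ with $N=\Coker\mathcal{D}$ up to twist) and the self-duality $\Ext^4_A(N,A)\cong N$ of Theorem \ref{res-3}(b) to strip the Matlis dual; parts (b) and (c) from the exactness and graded self-duality of the complex $C^{\bullet}$ of Theorem \ref{res-3}; and (d), (e) by Hilbert-function bookkeeping. The paper's own proof is a much terser version of exactly this argument, so your write-up simply supplies the details (including the correct parameter $a-b$ and the twist tracking) that the paper leaves implicit.
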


\begin{proof}
	(a): The statement in (a) follows by \ref{res-1} since $\Hom_\Bbbk(H^2_{A_+}(M),\Bbbk) \cong \Ext_A^2(M,A(-4)) \cong N(-4)$. Moreover, notice $H^2_{A_+}(M) \cong H^1_{A_+}(A/I_C)$. Now recall the isomorphism in \ref{res-3} (b), so that 
	the claim follows by local duality. \\
	(b), (c): Both statements are a consequence of Theorem \ref{res-3}. \\
	(d), (e): To this end count the Hilbert functions in order to obtain (d) and sum up for (e). 
\end{proof}

Some computational experiments are done by the aid of {\sc{Singular}} (see \cite{DGPS}).

\medskip

{\bf{Acknowledgment}:}
The present paper was partially written while the first author stayed at Max Planck Institute for Mathematics in the Science, Leipzig, Germany, August 15--September 8, 2025.


\end{document}